\documentclass[11pt,a4paper,reqno]{article}
\usepackage[latin1]{inputenc}
\usepackage[english]{babel}
\usepackage{xcolor}
\usepackage{a4wide,xspace}
\usepackage{amsmath, delarray,enumerate,calc,amsthm}
\usepackage{amssymb}
\usepackage{bbm}
\usepackage{txfonts}
\usepackage{paralist}
\usepackage{authblk}
\usepackage{stmaryrd}
\usepackage[mathscr]{euscript}
\usepackage{scalerel,stackengine}

\newtheorem{theorem}{Theorem}[section]
\newtheorem{lemma}[theorem]{Lemma}
\newtheorem{proposition}[theorem]{Proposition}

\newtheorem{definition}[theorem]{Definition}

\newtheorem{remark}[theorem]{Remark}

\numberwithin{equation}{section}


\newtheorem{corollary}[theorem]{Corollary}
\newtheorem{fact}[theorem]{Fact}

\newtheorem{theoremIprime}{Theorem}

\newtheorem{theoremIIprime}{Theorem}



\newcommand{\supp}{\operatorname{supp}}

\newcommand{\Bap}{\mathcal{B}\hspace*{-1pt}{\mathsf{ap}}}
\newcommand{\cA}{{\mathcal A}}

\newcommand{\cB}{{\mathcal B}}

\newcommand{\cH}{{\mathcal H}}

\newcommand{\cP}{{\mathcal P}}

\newcommand{\cM}{{\mathcal M}}

\newcommand{\LL}{{\mathscr L}}
\newcommand{\BB}{{\mathscr B}}

\newcommand{\R}{{\mathbbm R}}

\newcommand{\Z}{{\mathbbm Z}}

\newcommand{\SSS}{{\mathbbm S}}

\newcommand{\hx}{{\tilde{x}}}
\newcommand{\hy}{{\tilde{y}}}
\newcommand{\hX}{{\widetilde{X}}}
\newcommand{\hT}{\widetilde{T}}
\newcommand{\hXprime}{{\widetilde{X}'}}
\newcommand{\hTprime}{\widetilde{T}'}

\newcommand{\nuW}{\nu_{\scriptscriptstyle W}}
\newcommand{\nudW}{\nu_{\scriptscriptstyle d+W}}

\newcommand{\nuWinv}{\nu_{\scriptscriptstyle W_{inv}}}

\newcommand{\nuWG}{\nu_{\scriptscriptstyle W}^{\scriptscriptstyle G}}

\newcommand{\MW}{\cM_{\scriptscriptstyle W}}

\newcommand{\MG}{\cM^{\scriptscriptstyle G}}

\newcommand{\MWG}{\cM^{\scriptscriptstyle G}_{\scriptscriptstyle W}}

\newcommand{\MWGprime}{\cM^{\scriptscriptstyle G}_{\scriptscriptstyle W'}}

\newcommand{\piG}{\pi^{\scriptscriptstyle G}}
\newcommand{\piGhat}{\pi^{\scriptscriptstyle \widehat{G}}}
\newcommand{\piH}{\pi^{\scriptscriptstyle H}}

\newcommand{\piGH}{\pi^{\scriptscriptstyle G\times H}}
\newcommand{\pihX}{\pi^{\scriptscriptstyle \hX}}

\newcommand{\QM}{Q_{\scriptscriptstyle W}}

\newcommand{\QMinv}{Q_{\scriptscriptstyle W_{inv}}}
\newcommand{\QMG}{Q_{\scriptscriptstyle W}^{\scriptscriptstyle G}}
\newcommand{\QMdWG}{Q_{\scriptscriptstyle d+W}^{\scriptscriptstyle G}}

\newcommand{\QMbar}{Q_{\scriptscriptstyle \overline{W}}}
\newcommand{\QMGprime}{Q_{\scriptscriptstyle W'}^{\scriptscriptstyle G}}

\newcommand{\oneone}{1-1\,}
\newcommand{\dL}{\mathrm{dens}(\LL)}
\newcommand{\nuG}{\nu^{\scriptscriptstyle G}}

\newcommand{\nufG}[1]{\nu_{\scriptscriptstyle #1}^{\scriptscriptstyle G}}

\newcommand{\omegaG}{\omega^{\scriptscriptstyle G}}

\stackMath
\newcommand\reallywidecheck[1]{%
\savestack{\tmpbox}{\stretchto{%
  \scaleto{%
    \scalerel*[\widthof{\ensuremath{#1}}]{\kern-.6pt\bigwedge\kern-.6pt}%
    {\rule[-\textheight/2]{1ex}{\textheight}}
  }{\textheight}%
}{0.5ex}}%
\stackon[1pt]{#1}{\scalebox{-1}{\tmpbox}}%
}


\begin{document}

\title{Spectrum of weak model sets with Borel windows\footnote{Dedicated to Bob Moody on the occasion of his $80^{th}$ birthday}}

\author{Gerhard Keller$^{1}$, Christoph Richard$^{1}$ and Nicolae Strungaru$^{2}$}
\affil{1: Department Mathematik, Universit\"at Erlangen-N\"urnberg \\ \hspace{0.5cm} 2:
Department of Mathematical Sciences, MacEwan University}
\date{}

\maketitle

\abstract{Consider the extended hull of a weak model set together with its natural shift action. Equip the extended hull with the Mirsky measure, which is a certain natural pattern frequency measure. It is known that the extended hull is a measure-theoretic factor of some group rotation, which is called the underlying torus. Among other results, in the article \textit{Periods and factors of weak model sets} we showed that the extended hull is isomorphic to a factor group of the torus, where certain periods of the window of the weak model set have been factored out. This was proved for weak model sets having a compact window. In this note, we argue that the same results hold for arbitrary measurable and relatively compact windows. Our arguments crucially rely on Moody's work on uniform distribution in model sets. We also discuss implications for the diffraction of such weak model sets and discuss a new class of examples which are generic for the Mirsky measure.}

\section{The result}

Throughout this article, we will adopt the setting of \cite{KR2015, KR19}. For the statement of our result to be self-contained, we briefly recall the main notation. Fix locally compact second countable abelian groups $G,H$ with Haar measures $m_G,m_H$, and consider a co-compact lattice $\LL$ in $G\times H$, that projects injectively to $G$ and densely to $H$.  A \emph{window} is a measurable relatively compact set $W\subseteq H$.  By the so-called cut-and-project construction, these ingredients produce a weak model set. Let us describe this using point measures instead of sets.  Consider the compact quotient group $\hX=(G\times H)/\LL$, which is sometimes called the \emph{torus}.  (The torus is denoted by $\hat X$ in~\cite{KR19}. We changed notation from {\em hat} into {\em tilde} in order not to get into conflict with the group dual and the Fourier transform.) Fix $\hx=x+\LL\in\hX$. The cut step  yields the configuration $\nuW(\hx)=\sum_{y\in (x+\LL)\cap (G\times W)}\delta_y$, where $\delta_y$ puts a unit mass at $y\in G\times H$. The projection step maps the configuration $\nuW(\hx)$ to $G$, using the canonical projection $\piG:G\times H\to G$. This gives rise to a point measure $\nuWG(\hx)=(\piG_*\circ \nuW)(\hx)$, which has uniformly discrete support. The set $\Lambda_W(\hx)=\supp(\nuWG(\hx))$ is called a \emph{weak model set},
and we sometimes abbreviate $\Lambda_W=\supp(\nuWG(\tilde 0))$. The vague closure of $\nuWG(\hX)$ in the space of regular Borel measures on $G$ is called the \emph{extended hull} $\MWG$. The natural translation action $T$ on $G$, given by group addition $T_{g}g'=g+g'$, induces a translation action $\hT$ on $\hX$ by  $\hT_g \hx=(g,0)+\hx$ and an action $S$ on $\MWG$ by $(S_g\nu)(A)=\nu(T_g^{-1}A)$. Let us denote by $m_\hX$ the normalised Haar measure on $\hX$. Since $\nuWG$ is a measurable mapping, $(\MWG,S)$ carries a natural ergodic probability measure $\QMG=m_\hX\circ (\nuWG)^{-1}$, the so-called \emph{Mirsky measure}.

\smallskip

We have the following new results for the Mirsky measure  on the extended hull. These generalise Theorems B1 and B2 from \cite{KR19}, which were formulated for measurable, relatively compact windows $W\subseteq H$ that are  \emph{compact modulo $0$} \cite[Def.~3.5]{KR19}, i.e., there exist a compact set $K$ and set $N$ of zero Haar measure such that $W=K\triangle N$.
For the first result, recall that $W$ is \emph{Haar aperiodic} if $m_H((h+W)\triangle W)=0$ implies $h=0$. In Euclidean space, any nonempty window is Haar aperiodic.

\setcounter{theoremIprime}{1}

\begin{theoremIprime}\label{theo:B1gen}
Suppose that  $W$ is measurable, relatively compact and Haar aperiodic. Then $(\MWG,\QMG,S)$ is measure-theoretically isomorphic to $(\hX,m_\hX, \hT)$.
\end{theoremIprime}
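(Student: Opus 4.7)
The strategy is to show that the measure-preserving factor map
$\nuWG\colon (\hX, m_\hX, \hT) \to (\MWG, \QMG, S)$
is essentially injective; combined with the already-known factor structure this yields the measure-theoretic isomorphism. I would introduce the \emph{stabilizer group}
\[
K := \bigl\{\hat s \in \hX : \nuWG(\hx + \hat s) = \nuWG(\hx) \text{ for } m_\hX\text{-a.e.\ } \hx\bigr\}.
\]
A standard argument based on the translation invariance of $m_\hX$ shows that $K$ is a measurable subgroup of $\hX$, and the general factor theory of ergodic compact abelian group rotations identifies $(\MWG, \QMG, S)$ with the quotient system $(\hX/K, m_{\hX/K}, \hT)$. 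The theorem thus reduces to proving $K = \{\tilde 0\}$.

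The key technical input is the following \emph{separation lemma}: for any two measurable relatively compact windows $W_1, W_2 \subseteq H$ with $m_H(W_1 \triangle W_2) > 0$, one has $\nufG{W_1}(\hx) \ne \nufG{W_2}(\hx)$ for $m_\hX$-a.e.\ $\hx$. This follows from the pointwise identity
\[
\supp \nufG{W_1}(\hx) \,\triangle\, \supp \nufG{W_2}(\hx) \;=\; \supp \nufG{W_1 \triangle W_2}(\hx),
\]
which is a direct consequence of the injectivity of $\piG|_{\LL}$, combined with an extension of Moody's uniform distribution theorem to arbitrary measurable relatively compact windows (the ingredient the abstract highlights as borrowed from Moody, to be established separately, presumably in an appendix), which ensures that for $m_\hX$-a.e.\ $\hx$ the right-hand side has positive density $\dL \cdot m_H(W_1 \triangle W_2)$ and is in particular nonempty.

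To show $K = \{\tilde 0\}$, fix $\hat s \in K$ and choose any representative $(s_G, s_H) \in G \times H$ modulo $\LL$. A direct change of variables in the definition of $\nuW$ yields the elementary identity
\[
\nuWG(\hx + \hat s) = S_{s_G}\, \nufG{W - s_H}(\hx).
\]
Since Haar aperiodicity of $W$ (with $H \ne \{0\}$) forces $m_H(W) > 0$, both sides are $m_\hX$-a.s.\ nonzero point measures, supported in the cosets $x_G + \piG(\LL)$ and $s_G + x_G + \piG(\LL)$ of $\piG(\LL)$ in $G$, respectively. Equality of supports therefore forces these two cosets to coincide, i.e.\ $s_G \in \piG(\LL)$; after replacing $(s_G, s_H)$ by $(s_G, s_H) - \ell$ for a suitable $\ell \in \LL$, we may assume $s_G = 0$. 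The defining relation of $K$ then reduces to $\nuWG(\hx) = \nufG{W - s_H}(\hx)$ $m_\hX$-a.e., so the separation lemma yields $m_H(W \triangle (W - s_H)) = 0$, and Haar aperiodicity forces $s_H = 0$, hence $\hat s = \tilde 0$.

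I expect the main obstacle to be the extension of Moody's uniform distribution theorem beyond Riemann-integrable windows; the classical statement rests on $m_H(\partial W) = 0$, and the general Borel case presumably requires inner/outer approximation of $W$ by Riemann-integrable sets together with an ergodicity/Birkhoff-type argument on $(\hX, m_\hX, \hT)$ to pass to the limit. Once this extension is in hand, the group-theoretic core above is essentially algebraic.
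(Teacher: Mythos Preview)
Your proposal is correct, and it takes a genuinely different route from the paper.

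The paper does \emph{not} work with the stabilizer $K$ or invoke the classification of measurable factors of compact group rotations. Instead it factors $\nuWG=\piG_*\circ\nuW$ through the intermediate hull $\MW$ of configurations on $G\times H$: it quotes from~\cite{KR2015} that $\nuW:(\hX,m_\hX,\hT)\to(\MW,\QM,S)$ is already a measure-theoretic isomorphism, and then shows that $\piG_*:\MW\to\MWG$ is a.e.\ injective. For the latter it uses the \emph{refined} Moody statement (Proposition~\ref{prop:Moodyext}(b), the $\eta\in C_c(H)$-weighted version): from any Mirsky $1$-generic $\nuW(\hx)$ one recovers the measure $m_H|_W$ as the weak limit of the empirical $H$-distributions~\eqref{eq:mun}, so if $\nuWG(\hx)=\nuWG(\hy)$ then the shift $d$ relating $\nuW(\hx)$ and $\nuW(\hy)$ must fix $m_H|_W$, hence be a Haar period. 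The a.e.\ injectivity is then upgraded to a measure isomorphism via Lusin--Souslin.

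Your argument trades differently: you need only the coarser Moody statement (Proposition~\ref{prop:Moodyext}(a), density on $G$) for the separation lemma, and you avoid both the intermediate space $\MW$ and the descriptive-set-theory step, at the price of invoking the abstract fact that every measurable factor of an ergodic compact abelian group rotation is a quotient $\hX/K$ by a closed subgroup. This is standard (via pure point spectrum and Pontryagin duality), but you should state explicitly why your $K$ is \emph{closed} and coincides with the subgroup furnished by that theory. A bonus of your route is that it proves Theorem~\ref{theo:B2gen} simultaneously: dropping Haar aperiodicity, your argument yields $K=\pihX(\cH_W^{Haar})$ directly (the inclusion $\supseteq$ being the null-set computation the paper does for $W$ versus $W_{inv}$), so the detour through the auxiliary window $W_{inv}$ becomes unnecessary.
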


For the general case, consider the group $H_W^{Haar}=\{h\in H: m_H((h+W)\triangle W)=0\} $ of Haar periods of $W$.
Write $\cH_W^{Haar}=\{0\}\times H_W^{Haar}$ for the canonical embedding of $H_W^{Haar}$ into $G\times H$.

\begin{theoremIIprime}\label{theo:B2gen}
Suppose that $W$ is measurable, relatively compact and $m_H(W)>0$. Let $\hXprime=\hX/\pihX(\cH_W^{Haar})$ with induced $G$-action $\hTprime$ and Haar measure $m_{\hXprime}$. Then $(\MWG,\QMG, S)$ is measure-theoretically isomorphic to $(\hXprime, m_\hXprime,\hTprime)$.
\end{theoremIIprime}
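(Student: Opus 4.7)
The plan is to reduce Theorem~B2' to Theorem~B1' by passing to an auxiliary cut-and-project scheme in which the window has been quotiented to become Haar aperiodic. The natural candidate is encoded in the statement itself: the factor map $\pi:\hX\to\hXprime$.

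Set $K:=H_W^{Haar}$. This is a closed subgroup of $H$, by the $L^1(H)$-continuity of translation applied to $\1_W$ (which lies in $L^1$ because $W$ is relatively compact). Write $H':=H/K$, let $q:H\to H'$ be the quotient map, and put $\LL':=(\id_G\times q)(\LL)\subseteq G\times H'$. Since $\piG|_\LL$ is injective, every $(0,h)\in\LL$ satisfies $h=0$, so $(\id_G\times q)|_\LL$ is a bijection onto $\LL'$, and the quotient $(G\times H')/\LL'$ identifies canonically with $\hXprime=\hX/\pihX(\cH_W^{Haar})$ together with its induced $G$-action $\hTprime$. A routine check shows that $\LL'$ is a cocompact lattice projecting injectively to $G$ and densely to $H'$, so $(G,H',\LL')$ is a bona fide cut-and-project scheme. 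Using the essential $K$-invariance of $\1_W$, a disintegration of $m_H$ along cosets of $K$ yields a measurable, relatively compact window $W'\subseteq H'$ with $m_H(W\triangle q^{-1}(W'))=0$ and $m_{H'}(W')>0$; this $W'$ is Haar aperiodic in $H'$ because any Haar period of $W'$ lifts to a Haar period of $W$ in $H$, hence lies in $K$ and is trivial in $H'$.

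The main obstacle is to show that replacing $W$ by the Haar-equivalent set $q^{-1}(W')$ does not change the Mirsky measure. This cannot be done pointwise, since each orbit $x+\LL$ is countable and projects densely to $H$, so it may meet the null set $W\triangle q^{-1}(W')$ non-trivially. Moody's uniform distribution theorem is indispensable here: applied to the bounded Haar-null subset $W\triangle q^{-1}(W')$ of $H$, it yields $\nuWG(\hx)=\nu^G_{q^{-1}(W')}(\hx)$ for $m_{\hX}$-a.e.\ $\hx$. A direct comparison of the cut-and-project formulas then identifies the latter with the weak-model-set configuration of the auxiliary scheme evaluated at $\pi(\hx)\in\hXprime$, while $\pi_* m_\hX=m_\hXprime$ by uniqueness of Haar measure. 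Hence $(\MWG,\QMG,S)$ is measure-theoretically isomorphic to the extended hull of the auxiliary scheme equipped with its Mirsky measure and shift action. Finally, since $W'$ is Haar aperiodic with $m_{H'}(W')>0$, Theorem~B1' applied in the auxiliary scheme gives that this extended hull is measure-theoretically isomorphic to $(\hXprime, m_\hXprime, \hTprime)$, completing the proof.
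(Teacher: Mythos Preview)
Your overall strategy—quotienting by $K=H_W^{Haar}$ to obtain a Haar aperiodic window $W'\subseteq H'$ and then invoking Theorem~B1'—is exactly the route the paper takes. The paper makes the construction of $W'$ explicit through an intermediate strictly $K$-invariant version $W_{inv}\subseteq H$ (Lemma~\ref{lemma:W_inv}), then sets $W'=\varphi(W_{inv})$ and proves its Borel measurability via Souslin's theorem (Lemma~\ref{lemma:W'}); your ``disintegration'' sketch is heading in the same direction but glosses over this measurability issue, which is not entirely routine.

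There is, however, a genuine gap in your key step. You claim that Moody's uniform distribution theorem yields $\nuWG(\hx)=\nu^{G}_{q^{-1}(W')}(\hx)$ for $m_{\hX}$-a.e.\ $\hx$, but Moody's theorem only controls densities and averages: applied to the null window $N:=W\triangle q^{-1}(W')$ it tells you that $\Lambda_N(\hx)$ has density zero for a.e.\ $\hx$, which is far weaker than $\Lambda_N(\hx)=\varnothing$. The correct argument—and the one the paper uses—is elementary and does not involve Moody at all: the set $\{\hx\in\hX:\nuW(\hx)\neq\nu_{q^{-1}(W')}(\hx)\}$ is contained in $\pihX\bigl(\bigcup_{\ell\in\LL}((G\times N)-\ell)\bigr)$, a countable union of $m_{\hX}$-null sets since $\LL$ is countable and $m_H(N)=0$. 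So Moody is not ``indispensable'' for the reduction step; where it \emph{is} crucial is inside the proof of Theorem~B1' itself (via Lemma~\ref{lem:Haarper}), which you treat as a black box.
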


\begin{remark}[diffraction analysis]
The above result implies the known fact that the extended hull has pure point dynamical spectrum when equipped with the Mirsky measure, compare e.g.~\cite[Thm.~2a)]{KR2015}. In addition, the isomorphism in Theorem~\ref{theo:B2gen} explicitly describes the eigenvalues of the dynamical spectrum.  This is particularly useful for diffraction analysis as discussed in
Section~\ref{sec:Nicu-rel}, compare also the introduction to \cite{KR2015}. Let us mention here that $\hXprime$ characterises the group generated by the Bragg peak positions in the diffraction spectrum, i.e., that group is given by the $\widehat G$-projection of the group dual to $\hXprime$, which is viewed as a subgroup of $G\times H$. For details, see remarks~\ref{rem:ext} and~\ref{rem:dualgroup}.
\end{remark}

\begin{remark}[examples]
The above diffraction properties are realised by configurations which are generic for the Mirsky measure.
The precise connection is somewhat subtle, as Mirsky genericity on $G$ and on $G\times H$ have to be distinguished, see Theorem~\ref{thm.4.6} and Remark~\ref{rem:conclusions}.
For windows having almost no outer boundary, it is known that maximal density implies Mirsky genericity, see Remark~\ref{rem:cmg} below. Likewise, for windows having almost no inner boundary, minimal density implies Mirsky genericity.
Examples beyond these cases will be discussed in Section~\ref{sec:examples}.
\end{remark}

\section{Proof ingredients}

\subsection{Moody's uniform distribution theorem}

We will use a refinement of Moody's theorem on uniform distribution \cite[Thm.~1]{Moody2002}, which characterises sets of almost everywhere convergence. We first introduce the relevant notation. Consider any van Hove sequence $\cA=(A_n)_n$ in $G$ for averaging, see \cite[Eq.~(4)]{Moody2002} for a definition. Recall that $\nuG\in\MWG$ is Mirsky generic along $\cA$ if for every test function $\phi\in C(\MWG)$ the ergodic limit holds for the Mirsky measure $\QMG$ along $\cA$, i.e., we have
\begin{displaymath}
\lim_{n\to\infty} \frac{1}{m_G(A_n)} \int_{A_n} \phi(S_g \nuG) \, {\rm d}m_G(g)= \QMG(\phi) \ .
\end{displaymath}
In the sequel, we will consider ergodic limits on subclasses of test functions.
\begin{definition}[Mirsky $k$-genericity]\label{dfe:smg}
Let $\cA=(A_n)_n$ be any van Hove sequence in $G$, and let $k\in\mathbb N$.
We call $\nuG\in\MWG$ \textit{Mirsky $k$-generic} along $\cA$, if the ergodic limit holds for the Mirsky measure $\QMG$ along $\cA$, for every test function $\phi\in C(\MWG)$ of the form $\phi=\phi_{c_1}\cdot\ldots\cdot \phi_{c_k}\in C(\MWG)$, with $\phi_{c_i}\in C(\MWG)$ given by $\phi_{c_i}(\nu)=\nu(c_i)$ for $c_i\in C_c(G)$.
\end{definition}

Likewise, we will consider Mirsky genericity and Mirsky $k$-genericity of $\nu\in\MW$, i.e.,  with respect to the Mirsky measure $\QM=m_\hX\circ (\nuW)^{-1}$ along $\cA$, where $\MW$ denotes the vague closure of $\nuW(\hX)$ in the space $\cM$ of regular Borel measures on $G\times H$.

\begin{remark}[sets of Mirsky genericity]\label{rem:smg}
Fix  any \emph{tempered} van Hove sequence $\cA$ in $G$, see \cite[Eq.~(5)]{Moody2002} for a definition, and consider the set $\hX_{gen}=\hX_{gen}({\cA})$ of points $\hx\in\hX$ for which $\nuW(\hx)$ is Mirsky generic along $\cA$.  Note that $\hX_{gen}$  has full $m_\hX$-measure in $\hX$, which is seen as in the case of $\mathbb Z$-actions, see  e.g.~\cite[Cor.~4.20]{EW11}. Here we use that $G\times H$ is second countable and that $\MW$ is compact metrizable. This allows us to apply the Lindenstrauss ergodic theorem \cite[Thm.~1.2]{Lindenstrauss2001}, which holds for van Hove sequences that are tempered. For the existence of such averaging sequences, see e.g.~the discussion in \cite[Rem.~2.12 (v)]{mr13}. In particular, corresponding sets $\hX_k\supseteq \hX_{k+1}\supseteq \hX_{gen}$ for Mirsky $k$-genericity also have full $m_\hX$-measure, and we have $\hX_{gen}=\bigcap_{k\in \mathbb N} \hX_{k}$ by the Stone-Weierstrass theorem.
Observe that Mirsky $k$-genericity of $\nuWG(\hx)$ is inherited from Mirsky $k$-genericity of $\nuW(\hx)$, by continuity of the projection map $\piG_*$. Thus all sets of $k$-genericity for the Mirsky measure $\QMG$ are full $m_{\hX}$-measure sets. Moreover all of the above sets are $\hT$-invariant, as a consequence of the van Hove property.
\end{remark}

For the following proposition, note that for $\eta\in C_c(H)$ we have
\begin{displaymath}
(\eta\circ \piH)\cdot \nuW(\hx)=\sum_{y\in (x+\LL)\cap (G\times W)} \eta(y_H) \cdot\delta_y \ ,
\end{displaymath}
where we use the notation $y=(y_G,y_H)$ for $y\in G\times H$.

\begin{proposition}[Moody's uniform distribution theorem]\label{prop:Moodyext}
Assume that $W\subseteq H$ is relatively compact and measurable. Let $\cA=(A_n)_n$ be any van Hove sequence in $G$. Then the following hold.

\begin{itemize}
\item[(a)] The configuration $\nuWG(\hx)$ is Mirsky 1-generic along $-\cA$ if and only if
\begin{equation}\label{eq:maxdens}
\lim_{n\to\infty} \frac{\nuWG(\hx)(A_n)}{m_G(A_n)}=\lim_{n\to\infty} \frac{\nuW(\hx)(A_n\times H)}{m_G(A_n)}=\dL\cdot m_H(W) \ .
\end{equation}
\item[(b)] The configuration $\nuW(\hx)$ is Mirsky 1-generic along $-\cA$ if and only if
\begin{equation}\label{eq:Moody}
\lim_{n\to\infty} \frac{((\eta\circ \piH)\cdot \nuW(\hx))(A_n\times H)}{m_G(A_n)}=\dL\cdot m_H(\eta \cdot 1_W)
\end{equation}
for any $\eta\in C_c(H)$.
\end{itemize}
\end{proposition}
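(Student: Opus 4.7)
Both parts proceed along the same scheme. The ergodic average of a test function $\phi_c$ is rewritten, via Fubini and the change of variables $g\mapsto g-x_i$, as an integral against the counting function of $\nu$ on translates of $A_n$; the van Hove property together with uniform discreteness of the underlying point set then allows replacing those translates by $A_n$ itself, up to an $o(m_G(A_n))$ error. Independently, the value of the Mirsky measure on $\phi_c$ is obtained by unfolding the Haar measure of $\hX=(G\times H)/\LL$, which yields the factors $\dL\cdot m_H(W)$ and $\dL\cdot m_H(\eta\cdot 1_W)$ appearing in the stated equalities. Comparing the two expressions produces the desired equivalences.

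For (a), write $\nu=\nuWG(\hx)=\sum_i\delta_{x_i}$. Exchanging summation with integration (valid since $\supp\nu$ is uniformly discrete and $c$ has compact support) and substituting $g\mapsto g-x_i$ gives
\[
\frac{1}{m_G(A_n)}\int_{-A_n}(S_g\nu)(c)\,dm_G(g)
=\frac{1}{m_G(A_n)}\int_G c(h)\,\nu(h+A_n)\,dm_G(h).
\]
Since $\overline{W}$ is compact, $\supp\nu$ is uniformly discrete with a packing constant independent of $\hx$, so there is a constant $C$ with $\nu((h+A_n)\triangle A_n)\le C\cdot m_G(\partial^{K}A_n)$ uniformly in $h\in\supp c$ and $\hx\in\hX$, for some compact $K$ depending only on $\supp c$. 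Together with the van Hove property this yields
\[
\frac{1}{m_G(A_n)}\int_G c(h)\,\nu(h+A_n)\,dm_G(h)
=\Bigl(\int_G c\,dm_G\Bigr)\cdot\frac{\nu(A_n)}{m_G(A_n)}+o(1).
\]
A direct unfolding of $\hX$ gives $\QMG(\phi_c)=\dL\cdot m_H(W)\int_G c\,dm_G$. Picking $c\in C_c(G)$ with $\int c\neq 0$ then shows that Mirsky $1$-genericity implies \eqref{eq:maxdens}, while the converse is obtained by plugging \eqref{eq:maxdens} back into the displayed asymptotic. The second identity in \eqref{eq:maxdens} is immediate from $\nuWG=\piG_*\nuW$.

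For (b) the same manipulation works on $G\times H$. By the Stone--Weierstrass theorem, linear combinations of separable functions $c_G\otimes\eta$ with $c_G\in C_c(G)$ and $\eta\in C_c(H)$ are dense in $C_c(G\times H)$ in the compact-open topology; together with the uniform-in-$\hx$ bound on $\nuW(\hx)(K)$ for compact $K\subset G\times H$ (again from relative compactness of $W$), this reduces Mirsky $1$-genericity of $\nuW(\hx)$ to its verification on separable test functions. For $c=c_G\otimes\eta$ the computation becomes
\[
\frac{1}{m_G(A_n)}\int_G c_G(h)\,\big((\eta\circ\piH)\cdot\nu\big)\big((h+A_n)\times H\big)\,dm_G(h),
\]
and the same van Hove argument reduces this to $\bigl(\int_G c_G\,dm_G\bigr)\cdot\bigl((\eta\circ\piH)\cdot\nu\bigr)(A_n\times H)/m_G(A_n)+o(1)$. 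Using $\QM(\phi_{c_G\otimes\eta})=\dL\cdot m_H(\eta\cdot 1_W)\int_G c_G\,dm_G$, obtained by the same unfolding, Mirsky $1$-genericity of $\nuW(\hx)$ is seen to be equivalent to \eqref{eq:Moody}. The main technical point in both parts is the van Hove replacement step, which rests on the uniform-in-$\hx$ bound on the local point density afforded by the relative compactness of $W$; this is exactly what allows the argument to extend beyond the compact-modulo-zero setting of \cite{Moody2002}.
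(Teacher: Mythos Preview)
Your argument is correct and reaches the same conclusion as the paper, but by a genuinely different route. The paper follows Moody's original strategy: it invokes Lemma~\ref{lem:moodprep} to find a small compact $B\subseteq G$ on which $\pi_{\hX}$ is injective on $B\times W$, reduces (via a partition of unity) to test functions $\psi\otimes\eta$ with $\supp\psi\subseteq B$, and then evaluates the ergodic integral $\int_{-A_n}\tilde f(\hT_g\hx)\,dm_G(g)$ by observing that $\hT_g\hx\in\pi_{\hX}(B\times W)$ exactly when $g$ lies in the disjoint union $-\Lambda_W(\hx)+B$; the van Hove property then handles boundary effects. You instead bypass Lemma~\ref{lem:moodprep} entirely: the Fubini/change-of-variables identity
\[
\frac{1}{m_G(A_n)}\int_{-A_n}(S_g\nu)(c)\,dm_G(g)=\frac{1}{m_G(A_n)}\int_G c(h)\,\nu(h+A_n)\,dm_G(h)
\]
reduces everything to comparing $\nu(h+A_n)$ with $\nu(A_n)$, and translation boundedness of $\nu$ (from relative compactness of $W$) together with the van Hove property gives the $o(1)$ replacement directly. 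Your approach is more elementary and avoids both the injectivity lemma and the partition-of-unity reduction; the paper's approach, on the other hand, stays closer to Moody's geometric picture and makes the identification of the orbit integral with a lattice-point sum explicit. Both compute $\QM(\phi_c)$ via the same Weil-formula unfolding.
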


\begin{remark}[when Mirsky 1-genericity implies Mirsky genericity]\label{rem:cmg}
Consider any relatively compact and measurable window $W\subseteq H$. As limiting point frequencies of $\nuWG(\hx)$  always lie between $\dL\cdot m_H(W^\circ)$ and $\dL\cdot m_H(\overline{W})$, see e.g.~ \cite[Prop.~3.4]{HuckRichard15}, we say that $\nuWG(\hx)$ has maximal density along $\cA$ if the limit on the lhs in  Eq.~\eqref{eq:maxdens} equals $\dL\cdot m_H(\overline{W})$. As discussed in \cite[Rem.~3.16]{KR2015} and \cite[Rem.~8.7]{KR19}, maximal density of $\nuWG(\hx)$ along $\cA$ implies genericity of  $\nuW(\hx)$ along $-\cA$ with respect to the Mirsky measure $\QMbar$ on $G\times H$. One concludes that maximal density of $\nuWG(\hx)$ implies genericity of $\nuW(\hx)$ with respect to the Mirsky measure $\QM$ on $G\times H$ if and only if the  window satisfies $m_H(W)=m_H(\overline{W})$. This is a considerably stronger condition than the window being compact modulo $0$. Likewise, we speak of minimal density if the limit on the lhs in  Eq.~\eqref{eq:maxdens}  equals $\dL\cdot m_H(W^\circ)$. Minimal density of $\nuWG(\hx)$ along $\cA$ implies Mirsky genericity of $\nuW(\hx)$ along $-\cA$ if and only if $m_H(W)=m_H(W^\circ)$. See \cite[Thm.~17, Rem.~5]{BHS} for a variant of these results. An extension will be given in Lemma~\ref{lemma:generic} and Remark~\ref{rem:approx} below.
\end{remark}

\begin{remark}[Mirsky genericity on $G$ versus $G \times H$] Let us emphasize here that for each $W$ and $d \in H$ we have $\QMdWG=\QMG$, which follows
from the invariance of the Haar measure on $\tilde{X}$ under translation by $(0,d)+\LL$. Indeed, denoting $S_{(0,d)}:G\times H\to G\times H, x\mapsto x+(0,d)$ and
$\sigma_d:\MW\to\MW, \sigma_d\nu=\nu\circ S_{(0,d)}^{-1}$, we have
\begin{equation*}
\begin{split}
\QMdWG
&=
m_{\hX}\circ\nudW^{-1}\circ(\piG_*)^{-1}
=
m_{\hX}\circ(\sigma_d\circ\nuW\circ S_{(0,d)}^{-1})^{-1}\circ(\piG_*)^{-1}\\
&=
(m_{\hX}\circ S_{(0,d)})\circ \nuW^{-1} \circ(\piG_*\circ\sigma_d)^{-1}
=
m_{\hX}\circ\nuW^{-1}\circ(\piG_*)^{-1}=\QMG.
\end{split}
\end{equation*}
On another hand, if $\nuW(\hx)$ is Mirsky generic, then \eqref{eq:Moody} uniquely identifies the measure $\eta \mapsto m_H(\eta \cdot 1_W)$ for $\eta\in C_c(H)$, i.e., the Haar measure restricted to $W$. It follows immediately that $Q_{d+W}=Q_W$ if and only if $d$ is a Haar period for $W$.
\end{remark}

The above result can, with some adaptions, be proved as in \cite{Moody2002}. We start with the following lemma which slightly refines \cite[Prop.~2]{Moody2002}.

\begin{lemma}\label{lem:moodprep}
Let $W\subseteq H$ be relatively compact and nonempty. Then any point in $G$ has a compact neighborhood $B$ such that $((B-B)\times (W-W))\cap \LL=\{(0,0)\}$. As a consequence, for every $\hx\in \hX$ and every $g,g'\in \Lambda_W(\hx)$,\, $(g+B)\cap(g'+B)\ne\varnothing$ implies $g=g'$. The latter statement also holds with $W$ replaced by $-W$. Moreover the following are equivalent:
\begin{itemize}
\item[(i)] $((B-B)\times (W-W))\cap \LL=\{(0,0)\}$.
\item[(ii)] $\pi_{\hX}$ is one-to-one on $B\times W$.
\item[(iii)] $\Lambda_{W-W}\cap (B-B)=\{0\}$.
\end{itemize}
\end{lemma}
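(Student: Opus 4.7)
The plan is to establish existence of $B$ first; the rest of the statement then falls out by direct unpacking. By translation invariance of the condition (both $B-B$ and the conclusion $g = g'$ are unchanged when $B$ is replaced by $g_0 + B$), I may assume the given point is $0 \in G$.

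For existence, set $K := \overline{W} - \overline{W}$, which is compact in $H$ because $W$ is relatively compact, and note $W - W \subseteq K$. The crux is to show that $S := \piG(\LL \cap (G \times K))$ is discrete in $G$. Indeed, given any sequence $\ell_n \in \LL$ with $\ell_n^H \in K$ and $\ell_n^G \to g_0$, compactness of $K$ yields a subsequence with $\ell_n^H \to h_0$, so $\ell_n \to (g_0, h_0)$ in $G \times H$. Since $\LL$ is closed and discrete, $\ell_n = (g_0, h_0)$ eventually, and in particular $\ell_n^G = g_0$ eventually, giving discreteness of $S$ at $g_0$. Hence there is an open neighborhood $U$ of $0$ with $U \cap S = \{0\}$, and by continuity of subtraction in $G$ I can choose a compact neighborhood $B$ of $0$ with $B - B \subseteq U$. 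Any $\ell \in \LL$ with $\ell_G \in B - B$ and $\ell_H \in W - W \subseteq K$ then satisfies $\ell_G \in (B-B) \cap S = \{0\}$, and injectivity of $\piG|_\LL$ forces $\ell_H = 0$ as well.

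The consequence is immediate: for $g, g' \in \Lambda_W(\hx)$, lift them to $(g, h), (g', h') \in (x + \LL) \cap (G \times W)$; if $(g+B) \cap (g'+B) \ne \varnothing$ then their difference lies in $\LL \cap ((B-B) \times (W-W)) = \{(0,0)\}$, so $g = g'$. The same $B$ works with $W$ replaced by $-W$ because $(-W) - (-W) = W - W$.

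For the equivalences, (i) $\Leftrightarrow$ (iii) follows from unfolding $\Lambda_{W-W} = \{\ell_G : \ell \in \LL,\ \ell_H \in W - W\}$ and using injectivity of $\piG|_\LL$ to pass back from $\ell_G = 0$ to $\ell = 0$. For (i) $\Leftrightarrow$ (ii), the equality $\pi_{\hX}(b_1, w_1) = \pi_{\hX}(b_2, w_2)$ with $(b_i, w_i) \in B \times W$ is equivalent to $(b_1 - b_2, w_1 - w_2) \in \LL \cap ((B-B) \times (W-W))$, so (i) says exactly that this forces $(b_1, w_1) = (b_2, w_2)$. The only nontrivial ingredient in the whole argument is the discreteness of $S$, which rests solely on discreteness of $\LL$ plus compactness of $K$ and is entirely independent of the density/injectivity hypotheses on the lattice projections.
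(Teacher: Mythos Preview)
Your proof is correct and follows essentially the same route as the paper's: both establish the existence of $B$ by exploiting that $\LL$ meets the compact set $U\times\overline{W-W}$ discretely (the paper invokes local finiteness directly to get a finite intersection, while you phrase it sequentially as discreteness of the projected set $S$), and both derive the consequence and the equivalences by straightforward unpacking of definitions together with injectivity of $\piG|_\LL$. The only cosmetic difference is that the paper proves the equivalences as a cycle $(i)\Rightarrow(ii)\Rightarrow(iii)\Rightarrow(i)$, whereas you prove $(i)\Leftrightarrow(ii)$ and $(i)\Leftrightarrow(iii)$ directly.
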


\begin{proof}
For the existence statement, take any compact zero neighborhood $U\subseteq G$ and note that $(U\times (W-W)) \cap \LL$ is finite as $\LL$ is locally finite. Hence there is a zero neighborhood $V\subseteq U$ such that $(V\times (W-W))\cap\LL=\{(0,0)\}$. The first claim follows after choosing a compact neighborhood $B$ of the given point in $G$ such that $B-B\subseteq V$.

\smallskip

For the second claim let $g,g'\in \Lambda_W(\hx)$ such that $(g+B)\cap(g'+B)\ne \varnothing$. Then there exist $h,h'\in H$ such that $(g,h), (g',h')\in (G\times W) \cap (x+\LL)$. As $(g+B)\cap(g'+B)\ne \varnothing$, this implies $(g-g',h-h') \in ((B-B)\times (W-W)) \cap  \LL$. Hence $g=g'$. Note that replacing $W$ by $-W$ does not alter the argument.

\smallskip

\noindent $(i) \Rightarrow (ii):$ Consider $(g,h), (g',h')\in B\times W$ such that $(g,h)=(g',h')+\ell$ for some $\ell\in \LL$. We then have $(g-g',h-h')\in ((B-B)\times (W-W))\cap \LL$. Hence $(g-g',h-h')=(0,0)$, and the claim follows.

\noindent $(ii) \Rightarrow (iii):$ Let $g\in \Lambda_{W-W}\cap (B-B)$. Then there exists $h\in H$ such that $(g,h)\in ((B-B)\times (W-W))\cap \LL$. Then $g=b-b'$ for some $b,b'\in B$ and $h=w-w'$ for some $w,w'\in W$, and $(b,w)=(b',w')+\ell$ for  $\ell=(g,h)\in \LL$. Hence $(b,w)=(b',w')$, which implies $g=0$.

\noindent $(iii) \Rightarrow (i):$ Assume $(g,h)\in ((B-B)\times (W-W))\cap \LL$. Then $g\in \Lambda_{W-W}\cap (B-B)$, which implies $g=0$. As $\piGH$ is  \oneone on $\LL$, this implies $h=0$, and the claim follows.

\end{proof}

\begin{proof}\textit{(Proof of Proposition~\ref{prop:Moodyext})}
We treat assertion (b) first. By a standard denseness argument, it suffices to consider functions $c\in C_c(G\times H)$ of product type $c=\psi\circ \piG \cdot \eta\circ \piH$ where $\psi\in C_c(G)$ and $\eta\in C_c(H)$.
Recalling $\phi_c(\nu)=\nu(c)$, we have for $\hy=y+\LL=(y_G,y_H)+\LL$ that
\begin{displaymath}
\phi_c(\nuW(\hy)) =\sum_{z\in (y+\LL)\cap (G\times W)}  \psi(z_G)\cdot \eta(z_H) =\sum_{\ell\in\LL}  c(y+\ell)\cdot 1_W(y_H+\ell_H)=
\tilde f(\hy)\ ,
\end{displaymath}
where  $\tilde f\in \mathcal L^1(\hX, m_{\hX})$ denotes the projected $\LL$-periodisation of the function $y\mapsto f(y)=c(y)\cdot 1_W(y_H)$.
Using the extended Weil formula \cite[Thm.~3.4.6]{RS}, we thus get
\begin{displaymath}
\begin{split}
\int_{\MW} \phi_c \, {\rm d} \QM
&=
\int_{\hX}\phi_c(\nuW(\hy))   \, {\rm d}m_{\hX}(\hy)
= \int_{\hX} \tilde f(\hy) \, {\rm d}m_{\hX}(\hy) = m_{\hX}(\tilde f)\\
&=\dL\cdot m_{G\times H}(f)=
\dL\cdot m_G(\psi)\cdot m_H( \eta\cdot 1_W) \ .
\end{split}
\end{displaymath}
Next, consider the $G$-orbit of any $\hx\in \hX$.
Here we assume without loss of generality that $\psi\in C_c(G)$ has sufficiently small support such that $B=\supp(\psi)$ satisfies the assumption in Lemma~\ref{lem:moodprep}. (The general case of arbitrary compact support can be treated using a partion of unity by functions of small support.)
Write $Y=B\times W\subseteq G\times H$ and define $\widetilde Y=\pi_{\hX}(B\times W)\subseteq \hX$. It is readily seen that $\hT_g\hx \in \widetilde Y$ if and only if $g\in -\Lambda_W(\hx)+B$. In particular, in that case there exists $\ell\in \LL$ such that $(x+\ell)_G\in \Lambda_W(\hx)$, $g\in -(x+\ell)_G+B$, and $\tilde f(\widetilde T_g\hx)=\psi((x+\ell)_G)\cdot\eta((x+\ell)_H)\cdot 1_W((x+\ell)_H)$ as $\pi_{\hX}$ is 1-1 on $Y$. Note that $-\Lambda_W(\hx)+B$
is a pairwise disjoint union of translates of $B$, which follows from Lemma~\ref{lem:moodprep} as $-\Lambda_W(\hx)=\Lambda_{-W}(-\hx)$. As  $S_g \nuW(\hx)=\nuW(\hT_g\hx)$, we thus have by the van Hove property of $(-A_n)_n$ that
\begin{displaymath}
\begin{split}
\lim_{n\to\infty} \frac{1}{m_G(A_n)}& \int_{-A_n} \phi_c(S_g \nuW(\hx)) \, {\rm d}m_G(g)
=\lim_{n\to\infty} \frac{1}{m_G(A_n)} \int_{-A_n} \tilde f(\hT_g\hx) \, {\rm d}m_G(g)
\\
&=\lim_{n\to\infty} \frac{1}{m_G(A_n)} \sum_{y_G\in \Lambda_W(\hx)\cap A_n} \eta(y_H) \cdot m_G(\psi)\\
&=m_G(\psi)\cdot \lim_{n\to\infty} \frac{((\eta\circ \piH)\cdot \nuW(\hx))(A_n\times H)}{m_G(A_n)}\ ,
\end{split}
\end{displaymath}
provided that the above limit exists. Now the claim in part (b) is obvious.
\smallskip

\noindent The proof of (a) is analogous: reread the above proof of (b) for $\eta\equiv 1$, considering functions $c\in C_c(G)$ and 1-genericity with respect to $\QMG$.
\end{proof}

\subsection{Haar periods and periods}

For the following recall the notion of period group $H_W=\{h\in H: h+W=W\}$ and of Haar period group $H_W^{Haar}=\{h \in H: m_H((h+W)\Delta W)=0\}$. Then $W$ is called (Haar) aperiodic if its (Haar) period group is trivial.
In order to apply the techniques from \cite{KR19} with only minimal changes, we will circumvent the notion of  Haar regularity \cite[Rem.~3.12]{KR19}, which relies on compactness. Instead we will construct a measurable version $W_{inv}$ of $W$, which coincides with $W$ up to measure zero, but is strictly invariant under translation by any $h\in H_W^{Haar}$.

\smallskip

We start by reviewing some simple properties of $H_W^{Haar}$, which are listed in \cite[Lem.~7.1]{S20}, see also \cite[Fact~2]{BHS}. For completeness we include the straightforward proofs.
First, let us recall that for a measurable relatively compact set $W \subseteq H$, its covariogram function $c_W$ is defined via
\begin{equation}\label{eq:cov}
c_W:=1_W*1_{-W} \ ,
\end{equation}
where $*$ denotes convolution.
Note that $c_W$ is a positive definite function, which is continuous by \cite[Thm.~I.1.6 (b)]{Rud}, \cite[Prop.~3.6.3]{RS} and obviously has compact support. A simple computation yields for any $h\in H$ the relation
\begin{equation}\label{eq1}
m_H((h+W)\triangle W)=2\cdot m_H(W\setminus (W+h))=2\cdot \left(c_W(0)- c_W(h)\right) \ .
\end{equation}
We have the following characterisation of $H_W^{Haar}$.

\begin{lemma}\cite[Lem.~7.1]{S20}
Assume that $W\subseteq H$ is relatively compact and measurable. Then
\begin{displaymath}
\begin{split}
 H_W^{Haar}  &= \{ h \in H : \|1_W-T_h1_W \|_1 =0 \} = \{h \in H : c_W(h)=c_W(0) \} \\
 & = \{ h \in H : T_hc_W= c_W\} \ ,
 \end{split}
\end{displaymath}
where $(T_hf)(y)=f(y-h)$ denotes translation in $H$.  In particular $ H_W^{Haar}$ is a compact group.
\end{lemma}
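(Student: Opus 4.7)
The plan is to verify the three set equalities in turn and then to deduce that $H_W^{Haar}$ is a compact group. The first equality is essentially bookkeeping: since $(T_h 1_W)(y)=1_W(y-h)=1_{h+W}(y)$, one has $|1_W-T_h1_W|=1_{W\triangle(h+W)}$ pointwise, hence $\|1_W-T_h1_W\|_1=m_H((h+W)\triangle W)$, and the definition of $H_W^{Haar}$ gives the identification. The second equality is an immediate consequence of \eqref{eq1} above: $\|1_W-T_h1_W\|_1=2(c_W(0)-c_W(h))$ vanishes exactly when $c_W(h)=c_W(0)$.

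The third equality is the substantive step. The inclusion $\{h:T_hc_W=c_W\}\subseteq\{h:c_W(h)=c_W(0)\}$ is immediate by evaluating at $0$; the reverse inclusion is the one I expect to require the most care. Here I would invoke positive definiteness of $c_W$ via the classical Cauchy--Schwarz type estimate
\begin{equation*}
|c_W(x)-c_W(x-h)|^2 \le 2\,c_W(0)\bigl(c_W(0)-\operatorname{Re} c_W(h)\bigr)
\end{equation*}
valid for every continuous positive definite function, which in turn comes from applying $|\langle u,v\rangle|^2\le\|u\|^2\|v\|^2$ in the GNS Hilbert space associated to $c_W$ (with $\langle u_x,u_y\rangle=c_W(y-x)$ and $c_W(x)=\langle u_x,u_0\rangle$). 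Since $c_W$ is real-valued, the right-hand side equals $2c_W(0)(c_W(0)-c_W(h))$, which vanishes under the hypothesis $c_W(h)=c_W(0)$; this forces $(T_hc_W)(x)=c_W(x-h)=c_W(x)$ for every $x$.

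To recognise $H_W^{Haar}$ as a compact group I would use the characterisations just proved. Closedness is immediate from continuity of $c_W$, since $H_W^{Haar}$ is the preimage of the single point $c_W(0)$. The subgroup property is clearest from the $L^1$ description: translation invariance of $m_H$ together with the triangle inequality gives $\|1_W-T_{h_1+h_2}1_W\|_1 \le \|1_W-T_{h_1}1_W\|_1 + \|1_W-T_{h_2}1_W\|_1$, while $\|1_W-T_{-h}1_W\|_1=\|1_W-T_h1_W\|_1$. Assuming $m_H(W)>0$ (the case $m_H(W)=0$ being trivial), one has $c_W(0)>0$, so every $h\in H_W^{Haar}$ satisfies $c_W(h)>0$ and therefore lies in $\supp(c_W)\subseteq\overline{W-W}$. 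The latter is compact because $W$ is relatively compact, so $H_W^{Haar}$ is a closed subset of a compact set, hence compact.
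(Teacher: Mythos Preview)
Your argument matches the paper's essentially line by line: both obtain the first two equalities from the identity \eqref{eq1}, establish the nontrivial inclusion in the third via Krein's inequality $|f(y-h)-f(y)|^2\le 2f(0)(f(0)-\operatorname{Re} f(h))$ for continuous positive definite $f$, and deduce compactness from $H_W^{Haar}$ being the closed period group of the continuous compactly supported function $c_W$ (hence contained in $\overline{W-W}$). One small caveat: your aside that the case $m_H(W)=0$ is ``trivial'' is a slip---then $H_W^{Haar}=H$, which need not be compact---but the paper's proof glosses over this degenerate case as well, so the lemma tacitly requires $m_H(W)>0$.
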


\begin{proof}
The first equality follows immediately from the observation $\|1_W-T_h1_W \|_1 = m_H(W \Delta (h+W))$,  while the second one follows from Eq.~\eqref{eq1}. For the last equality, the inclusion $\supseteq$ is obvious, while $\subseteq$ is an immediate consequence of Krein's inequality $|f(y-h)-f(y)|^2\le 2f(0)(f(0)-\mathrm{Re} f(h))$ for positive definite functions $f$, see e.g.~ \cite[Ch.~I.3.4]{BF}. Finally, since $c_W$ is a continuous function of compact support, its period group is closed and relatively compact, hence compact.
\end{proof}

We can now prove the existence of the measurable version $W_{inv}$ of $W$.

\begin{lemma}\label{lemma:W_inv}
There exists a measurable set $W_{inv}\subseteq H$ such that
\vspace{-1ex}
\begin{itemize}
\item[(a)] $m_H(W\triangle W_{inv})=0$ and
\item[(b)] $W_{inv}+h=W_{inv}$ for all $h\in H_W^{Haar}$.
\end{itemize}
\end{lemma}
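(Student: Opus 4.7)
The plan is to construct $W_{inv}$ by saturating $W$ along cosets of $K := H_W^{Haar}$. By the previous lemma $K$ is a compact subgroup of $H$, so I normalize Haar measures so that Weil's formula reads
\[
\int_H f \, {\rm d}m_H = \int_{H/K} \int_K f(x+k) \, {\rm d}m_K(k) \, {\rm d}m_{H/K}(\dot x)
\]
for every $f \in L^1(H)$, where $q : H \to H/K$ is the quotient map and $\dot x = q(x)$. I then introduce the fiber-measure function $g : H/K \to [0, m_K(K)]$ defined by $g(\dot x) := m_K((W - x) \cap K)$, which is well-defined by translation invariance of $m_K$ on itself, is $m_{H/K}$-measurable by Fubini, and satisfies $\int g \, {\rm d}m_{H/K} = m_H(W) < \infty$.

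The heart of the argument is the dichotomy $g(\dot x) \in \{0, m_K(K)\}$ for $m_{H/K}$-a.e.\ $\dot x$. Writing $A_x := (W - x) \cap K \subseteq K$, for $h \in K$ one checks that $((W+h) \triangle W - x) \cap K = (A_x + h) \triangle A_x$, so $m_H((W+h) \triangle W) = 0$ together with Fubini yields, for each fixed $h \in K$, the identity $m_K((A_x + h) \triangle A_x) = 0$ for $m_{H/K}$-a.e.\ $\dot x$. By second countability of $H$, and hence of $K$, I pick a countable dense $Q \subseteq K$ and obtain, outside a single $m_{H/K}$-null set, simultaneous invariance $m_K((A_x + q) \triangle A_x) = 0$ for every $q \in Q$. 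Continuity of $h \mapsto m_K((A_x + h) \triangle A_x)$ (equivalently, continuity of the covariogram of $A_x$) extends this to all $h \in K$, so $A_x$ is $K$-invariant modulo $m_K$-null sets. Since the translation action of $K$ on itself is ergodic, this forces $m_K(A_x) \in \{0, m_K(K)\}$.

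Setting $B := g^{-1}(\{m_K(K)\})$, which is measurable in $H/K$, and defining $W_{inv} := q^{-1}(B)$ now completes the construction. By construction $W_{inv}$ is a union of entire $K$-cosets, so $W_{inv} + h = W_{inv}$ for every $h \in K$, giving (b). For (a), apply Weil's formula to $1_{W \triangle W_{inv}}$: the fiber over $\dot x$ equals $A_x \triangle \varnothing$ when $\dot x \notin B$ and $A_x \triangle K$ when $\dot x \in B$, both of which have $m_K$-measure zero by the dichotomy, so $m_H(W \triangle W_{inv}) = 0$.

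I expect the main obstacle to be the exchange-of-quantifiers step in the dichotomy, where one must promote ``for each fixed $h$, a.e.\ $\dot x$ is $h$-invariant'' to ``a.e.\ $\dot x$ is invariant under every $h$''. This is precisely where second countability of $K$ (reducing to a countable dense $Q$) combined with continuity of the covariogram (interpolating back to all of $K$) is essential; the rest is bookkeeping via Weil's formula and ergodicity of a compact group acting on itself by translations.
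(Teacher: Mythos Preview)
Your construction is correct and in fact defines the very same set as the paper: writing $K=H_W^{Haar}$ with normalized Haar measure $m_K$, the paper's averaged indicator $\psi(h)=\int_K 1_W(h+k)\,{\rm d}m_K(k)$ satisfies $\psi=g\circ q$, so your $W_{inv}=q^{-1}(g^{-1}\{1\})$ coincides with the paper's $W_{inv}=\{\psi=1\}$, and both proofs of (b) are the same.

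The difference lies entirely in the verification of (a). You establish the dichotomy $g(\dot x)\in\{0,1\}$ a.e.\ by pushing the Haar-period condition down to the fibers via Weil's formula, then invoking a countable dense set in $K$, continuity of translation in $L^1(K)$, and ergodicity of $K$ on itself; afterwards you integrate the fiberwise symmetric difference back up. The paper bypasses all of this with a single Fubini step: for every $A$ of finite measure and every $h_0\in K$ one has $m_H(A\cap W)=m_H(A\cap(W-h_0))=\int_A 1_W(\cdot+h_0)\,{\rm d}m_H$, and integrating this identity over $h_0\in K$ yields $m_H(A\cap W)=\int_A\psi\,{\rm d}m_H$, hence $1_W=\psi$ $m_H$-a.e. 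This simultaneously gives the dichotomy (since $1_W$ is $\{0,1\}$-valued) and part (a), and it sidesteps the exchange-of-quantifiers issue you identified as the main obstacle. Your argument is sound, just longer; the paper's shortcut is worth noting.
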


\begin{proof}
Abbreviate $H_0:=H_W^{Haar}$ and denote by $m_{H_0}$ the normalized Haar measure on the compact abelian group $H_0$. Define
$\psi:H\to\R$ as the $H_0$-periodisation of $1_W$, i.e.,
$$
\psi(h):=\int 1_W(h+h_0)\,{\rm d}m_{H_0}(h_0) \ ,
$$
and let $W_{inv}:=\{h\in H:\psi(h)=1\}$.
As $m_{H_0}$ is translation invariant, we have $\psi(h+h_0)=\psi(h)$ for all $h_0\in H_0$, and assertion (b) follows at once.

We turn to assertion (a).
For measurable $A\subseteq H$ with $m_H(A)<\infty$ and all $h_0\in H_0$ we have:
\begin{equation*}
m_H(A\cap W)= m_H(A\cap (W-h_0))=\int_A1_W(h+h_0)\,{\rm d}m_H(h) \ .
\end{equation*}
Hence, using Fubini,
\begin{equation*}
\begin{split}
m_H(A\cap W)
&= \int\left(\int_A1_W(h+h_0)\,{\rm d}m_H(h)\right){\rm d}m_{H_0}(h_0)\\
&=
\int_A\left(\int1_W(h+h_0)\,{\rm d}m_{H_0}(h_0)\right){\rm d}m_H(h)
= \int_A\psi\,{\rm d}m_H \ .
\end{split}
\end{equation*}
As $m_H$ is $\sigma$-finite and this holds for all $A\subseteq H$ of finite measure, it follows that $1_W\cdot m_H=\psi \cdot m_H$, i.e., $1_W=\psi$ on a measurable set $H_1\subseteq H$ with $m_H(H\setminus H_1)=0$.
It follows that $W\cap H_1=W_{inv}\cap H_1$.
\end{proof}

The lemma has the following immediate corollary.
\begin{corollary}(Periods and Haar periods)\label{coro:Haar-and-other-periods}\\
We have $H_W^{Haar}=H_{W_{inv}}^{Haar}=H_{W_{inv}}$. In particular $W$ is Haar aperiodic if and only if $W_{inv}$ is aperiodic. \qed
\end{corollary}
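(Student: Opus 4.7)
The plan is to establish the chain of inclusions $H_W^{Haar} \subseteq H_{W_{inv}} \subseteq H_{W_{inv}}^{Haar} \subseteq H_W^{Haar}$, from which the double equality is immediate; the aperiodicity statement is then a direct translation of the definitions.

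First I would observe that $H_W^{Haar} \subseteq H_{W_{inv}}$ is essentially a restatement of property (b) in Lemma~\ref{lemma:W_inv}: for every $h \in H_W^{Haar}$ we have $W_{inv} + h = W_{inv}$, so $h$ lies in the (strict) period group of $W_{inv}$. Next, $H_{W_{inv}} \subseteq H_{W_{inv}}^{Haar}$ is trivial, as every strict period is a Haar period (a set equality forces equality of symmetric differences with $W_{inv}$ to be empty, hence of Haar measure zero).

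The key remaining inclusion is $H_{W_{inv}}^{Haar} \subseteq H_W^{Haar}$. The cleanest way is to exploit property (a), namely $m_H(W \triangle W_{inv}) = 0$. Since symmetric difference behaves associatively modulo null sets, for any $h \in H$ we have
\begin{equation*}
m_H\bigl((h+W_{inv}) \triangle W_{inv}\bigr) = m_H\bigl((h+W) \triangle W\bigr),
\end{equation*}
using that $(h+W) \triangle (h+W_{inv})$ is a translate of $W \triangle W_{inv}$ and hence has zero Haar measure. Consequently the vanishing of one side is equivalent to the vanishing of the other, which gives $H_{W_{inv}}^{Haar} = H_W^{Haar}$ as sets.

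Chaining the three inclusions yields $H_W^{Haar} = H_{W_{inv}} = H_{W_{inv}}^{Haar}$. For the final sentence of the corollary, $W$ being Haar aperiodic means $H_W^{Haar} = \{0\}$, which by the just-proven equalities is the same as $H_{W_{inv}} = \{0\}$, i.e., $W_{inv}$ being aperiodic. No step seems to present any real obstacle; the only mild subtlety is bookkeeping the null-set manipulation used in the third inclusion, but it is standard.
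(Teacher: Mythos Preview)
Your argument is correct and is precisely the unpacking the paper has in mind: the corollary is stated as an immediate consequence of Lemma~\ref{lemma:W_inv} (note the \qed with no proof given), and your chain $H_W^{Haar}\subseteq H_{W_{inv}}\subseteq H_{W_{inv}}^{Haar}=H_W^{Haar}$ is exactly how one reads it off from parts (a) and (b) of that lemma.
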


Let $H':=H/H_{W_{inv}}$ and denote by $\varphi:H\to H'$ the canonical projection. Consider $W':=\varphi(W_{inv})$ and note $\varphi^{-1}(W')=W_{inv}$.

\begin{lemma}\label{lemma:W'}
$W'\subseteq H'$ is Borel measurable and Haar aperiodic in $H'$.
\end{lemma}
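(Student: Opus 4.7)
The plan is to treat the two claims separately. For Haar aperiodicity, I expect a short argument via Weil's integration formula; the slightly subtle point is Borel measurability of $W'$, since the image of a Borel set under a quotient map is not automatically Borel without extra input.

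For the aperiodicity part, I would argue as follows. Let $H_0 := H_{W_{inv}} = H_W^{Haar}$ (using Corollary~\ref{coro:Haar-and-other-periods}), so that $\varphi:H\to H'=H/H_0$ is a quotient by the compact group $H_0$. Suppose $h'\in H'$ satisfies $m_{H'}((h'+W')\triangle W')=0$, and pick any lift $h\in\varphi^{-1}(h')$. Since $W_{inv}$ is strictly $H_0$-invariant by Lemma~\ref{lemma:W_inv}(b), so is $h+W_{inv}$, and therefore so is the symmetric difference $(h+W_{inv})\triangle W_{inv}$. In particular, $1_{(h+W_{inv})\triangle W_{inv}}=1_{(h'+W')\triangle W'}\circ\varphi$. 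Applying the Weil formula (with the Haar measures on $H$, $H_0$, $H'$ normalized compatibly, $m_{H_0}$ being the normalized Haar measure on $H_0$) to this $H_0$-invariant indicator function gives
\begin{equation*}
m_H\bigl((h+W_{inv})\triangle W_{inv}\bigr)=m_{H'}\bigl((h'+W')\triangle W'\bigr)=0.
\end{equation*}
Hence $h\in H_{W_{inv}}^{Haar}=H_0$ by Corollary~\ref{coro:Haar-and-other-periods}, so $h'=\varphi(h)=0$ in $H'$, proving $H'_{W'}{}^{Haar}=\{0\}$.

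For Borel measurability, I want to show that $W'=\varphi(W_{inv})$ is Borel in $H'$. Since $W_{inv}$ is $H_0$-invariant, $\varphi^{-1}(W')=W_{inv}$, which is Borel (it is the level set $\{\psi=1\}$ of the Borel function $\psi$ constructed in Lemma~\ref{lemma:W_inv}). To conclude that $W'$ itself is Borel, I would use that $\varphi$ is a continuous, open, and proper surjection (properness comes from compactness of $H_0$): this implies that the Borel $\sigma$-algebra on $H'$ coincides with $\{B\subseteq H':\varphi^{-1}(B)\text{ is Borel in }H\}$. A direct way to see this is to invoke the existence of a Borel cross-section $\sigma:H'\to H$ with $\varphi\circ\sigma=\id_{H'}$ (available by a standard selection theorem for quotients of LCSC groups by closed subgroups); then $W'=\sigma^{-1}(W_{inv})$ is Borel. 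Alternatively, one can check that $\varphi$ sends $H_0$-invariant open (resp.\ closed) sets to open (resp.\ closed) sets and extend this to all $H_0$-invariant Borel sets by transfinite induction, using the fact that on $H_0$-saturated sets $\varphi$ commutes with countable intersections.

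The main obstacle, as far as I can see, is not a deep one but a bookkeeping issue: making the Borel-measurability step airtight without invoking heavy machinery. Everything else follows cleanly from the $H_0$-invariance of $W_{inv}$ and the Weil formula. Once the quotient map is handled carefully once, both conclusions fall out in parallel, so I would state a small preliminary observation ("$\varphi$ induces a bijection between $H_0$-invariant Borel subsets of $H$ and Borel subsets of $H'$, compatible with Haar measures") and then deduce the two assertions of Lemma~\ref{lemma:W'} immediately.
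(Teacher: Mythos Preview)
Your argument is correct. The Haar aperiodicity part is essentially the same as the paper's: they write $m_{H'}=m_H\circ\varphi^{-1}$ and use $\varphi^{-1}((h'+W')\triangle W')=(h+W_{inv})\triangle W_{inv}$ directly, which is exactly your Weil-formula step specialised to an $H_0$-invariant indicator.

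The Borel measurability argument, however, is handled differently. The paper avoids both cross-sections and transfinite induction by a short Souslin trick: set $W''=\varphi(H\setminus W_{inv})$; then $W'\cup W''=H'$, and $W'\cap W''=\varnothing$ because any common point would witness $h_2-h_1\in H_{W_{inv}}$ with $h_1\in W_{inv}$, $h_2\notin W_{inv}$, contradicting $H_0$-invariance. Thus $W'$ and its complement are both analytic (continuous images of Borel sets), hence Borel by Souslin's theorem. Your route via a Borel selection $\sigma:H'\to H$ is equally valid and gives the slightly stronger general statement you mention (the bijection between $H_0$-saturated Borel sets in $H$ and Borel sets in $H'$), at the cost of invoking a selection theorem for LCSC quotients; the paper's approach needs only the elementary fact that continuous images of Borel sets are analytic plus Souslin, and no auxiliary section.
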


\begin{proof}
We first show measurability of $W'$. Let $W'':=\varphi(H\setminus W_{inv})$. Then $W'\cup W''=\varphi(H)=H'$, where $W'\cap W''=\varnothing$. Indeed, otherwise there are $h_1\in W_{inv}$ and $h_2\in H\setminus W_{inv}$ such that $\varphi(h_1)=\varphi(h_2)$. Then $h_2-h_1\in H_{W_{inv}}$, so that $W_{inv}+(h_2-h_1)=W_{inv}$. In particular $h_2=h_1+(h_2-h_1)\in W_{inv}$, a contradiction. As $W'$ and $W''=H'\setminus W'$ are both analytic sets \cite[(14.4)ii)]{Kechris}, they are Borel sets in view of Souslin's theorem \cite[(14.11)]{Kechris}. In order to show Haar aperiodicity, suppose that $m_{H'}((W'+h')\triangle W')=0$ for some $h'=\varphi(h)\in H'$, where $m_{H'}=m_H\circ\varphi^{-1}$. Then $0=m_H((W_{inv}+h+H_{W_{inv}})\triangle W_{inv})=m_H((W_{inv}+h)\triangle W_{inv})$,
so that $h\in H_{W_{inv}}^{Haar}=H_{W_{inv}}$, see Corollary~\ref{coro:Haar-and-other-periods}. Hence $h'=\varphi(h)$ is the neutral element in $H'$.
\end{proof}

\section{Proofs}

\subsection{Haar aperiodic windows}

Our proof of Theorem~\ref{theo:B1gen} uses Mirsky 1-generic configurations on $G\times H$ along some fixed tempered van Hove sequence.
Recall that the set $\hX_{1}\subseteq \hX$ from Remark~\ref{rem:smg} has full $m_\hX$-measure and is $\hT$-invariant.

\begin{lemma}\label{lem:Haarper}
Take $\hx,\hy\in \hX_{1}$ such that $\nuWG(\hx)=\nuWG(\hy)$. Then $\nuW(\hy)=\sigma_d\nuW(\hx)$ for some $d\in H$, where $(\sigma_d\nu)(A)=\nu(A-(0,d))$ for all Borel subsets $A$ of $G\times H$. Moreover $d$ is a Haar period of $W$.
\end{lemma}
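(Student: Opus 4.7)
The plan splits into two steps: first, pure lattice algebra forces $\nuW(\hy) = \sigma_d \nuW(\hx)$ for some $d \in H$; second, Moody's theorem (Proposition~\ref{prop:Moodyext}(b)), applied to both $\hx$ and $\hy$ in $\hX_1$, upgrades $d$ to a Haar period of $W$.

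For the first step, note that because $\LL$ projects injectively to $G$, the pushforward $\piG_*$ creates no multiplicities, so $\nuWG(\hx) = \nuWG(\hy)$ implies equality of the supports $\Lambda_W(\hx) = \Lambda_W(\hy) =: \Lambda$. The same injectivity pins down, for each $g \in \Lambda$, unique $h^x_g, h^y_g \in W$ with $(g, h^x_g) \in x + \LL$ and $(g, h^y_g) \in y + \LL$. Now
\begin{equation*}
\ell_g := (g - x_G,\, h^x_g - x_H) - (g - y_G,\, h^y_g - y_H) = (y_G - x_G,\, h^x_g - h^y_g + y_H - x_H)
\end{equation*}
lies in $\LL$ and has $G$-component $y_G - x_G$ independent of $g$. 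Invoking injectivity of $\piG$ on $\LL$ once more, $\ell_g$ itself is a fixed $\ell_0 \in \LL$ independent of $g$; setting $d := y_H - x_H - (\ell_0)_H$ then gives $h^y_g = h^x_g + d$ for every $g \in \Lambda$, which amounts to $\nuW(\hy) = \sigma_d \nuW(\hx)$. The degenerate case $\Lambda = \varnothing$ is handled by taking $d = 0$.

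For the second step, apply Proposition~\ref{prop:Moodyext}(b) to $\hy \in \hX_1$: for every $\eta \in C_c(H)$,
\begin{equation*}
\lim_{n \to \infty} \frac{1}{m_G(A_n)} \sum_{g \in \Lambda \cap A_n} \eta(h^y_g) = \dL \cdot m_H(\eta \cdot 1_W).
\end{equation*}
Applying the proposition to $\hx \in \hX_1$ with the translated test function $\eta_d \in C_c(H)$ defined by $\eta_d(h) := \eta(h+d)$ yields
\begin{equation*}
\lim_{n \to \infty} \frac{1}{m_G(A_n)} \sum_{g \in \Lambda \cap A_n} \eta(h^x_g + d) = \dL \cdot m_H(\eta_d \cdot 1_W) = \dL \cdot m_H(\eta \cdot 1_{d + W}).
\end{equation*}
Since $h^y_g = h^x_g + d$, the two left-hand sides coincide, so $m_H(\eta \cdot 1_W) = m_H(\eta \cdot 1_{d+W})$ for every $\eta \in C_c(H)$. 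As both indicators are integrable ($W$ and $d+W$ being relatively compact), Riesz representation forces $1_W = 1_{d+W}$ $m_H$-almost everywhere, i.e., $d \in H_W^{Haar}$.

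The main obstacle is modest: the algebraic step is routine once one extracts full mileage from injectivity of $\piG$ on $\LL$, which simultaneously ensures well-definedness of $h^x_g, h^y_g$ and forces $\ell_g$ to be independent of $g$; and the Haar-period step is a two-line application of Moody's theorem combined with a translation of test functions. The only bookkeeping to mind is matching the van Hove sign conventions between Definition~\ref{dfe:smg} and Proposition~\ref{prop:Moodyext}(b).
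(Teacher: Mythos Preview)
Your proof is correct and follows essentially the same route as the paper's: the paper cites \cite[Lem.~4.4]{KR19} for the existence of $d$ with $\nuW(\hy)=\sigma_d\nuW(\hx)$, whereas you reprove that lemma directly from injectivity of $\piG$ on $\LL$; and your second step---applying Proposition~\ref{prop:Moodyext}(b) to both $\hx$ and $\hy$ with a translated test function to force $m_H|_W=m_H|_{d+W}$---is exactly the paper's weak-convergence argument for the measures $\mu_n(\hx)$, just unpacked. The van Hove sign bookkeeping you flag is handled identically (and equally implicitly) in the paper.
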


\begin{proof}
Proposition~\ref{prop:Moodyext} (b) shows that for each $\hx\in\hX_{1}$ the sequence of measures $(\mu_n(\hx))_n$, defined by
\begin{equation}\label{eq:mun}
\mu_n(\hx)(\eta):= \frac{1}{\dL}
\frac{((\eta\circ \piH)\cdot \nuW(\hx))(A_n\times H)}{m_G(A_n)}
\end{equation}
for $\eta\in C_c(H)$, converges weakly to $m_H|_W$.
Take $\hx,\hy\in \hX_{1}$ such that $\nuWG(\hx)=\nuWG(\hy)$.
Then, by \cite[Lem.~4.4]{KR19}, there is $d\in H$ such that $\nuW(\hy)=\sigma_d\nuW(\hx)$.
As both sequences $(\mu_n(\hx))_n$ and $(\mu_n(\hy))_n$ converge weakly to $m_H|_W$ and as the translation $\sigma_d$ is weakly continuous, this shows that $\sigma_d(m_H|_W)=m_H|_W$, in particular $m_H((W-d)\cap W)=m_H(W)$. As $m_H(W)=m_H(W-d)$, this proves  $m_H((W-d)\triangle W)=0$, i.e., $d$ is a Haar period of~$W$.
\end{proof}

\begin{lemma}\label{lem:Hap}
Define $\MW'\subseteq \MW$ by $\MW'=\nuW(\hX_{1})$. If $W$ is Haar aperiodic, then $\piG_*|_{\MW'}: \MW'\to \MWG$ is \oneone.
\end{lemma}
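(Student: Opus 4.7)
The plan is to reduce the statement directly to Lemma~\ref{lem:Haarper}. Given $\nu_1, \nu_2 \in \MW'$ with $\piG_*(\nu_1)=\piG_*(\nu_2)$, the goal is to conclude $\nu_1=\nu_2$. By definition of $\MW'$, there exist $\hx,\hy\in\hX_1$ with $\nu_1=\nuW(\hx)$ and $\nu_2=\nuW(\hy)$. The hypothesis $\piG_*(\nu_1)=\piG_*(\nu_2)$ then reads precisely $\nuWG(\hx)=\nuWG(\hy)$.

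Now I would invoke Lemma~\ref{lem:Haarper} directly: since $\hx,\hy\in\hX_1$ and $\nuWG(\hx)=\nuWG(\hy)$, there exists $d\in H$ such that $\nuW(\hy)=\sigma_d\nuW(\hx)$, and $d$ is a Haar period of $W$, i.e., $d\in H_W^{Haar}$. The Haar aperiodicity assumption forces $H_W^{Haar}=\{0\}$, hence $d=0$. Since $\sigma_0$ is the identity on $\cM$, we get $\nuW(\hy)=\nuW(\hx)$, that is, $\nu_1=\nu_2$, which establishes injectivity.

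There is essentially no obstacle here, since Lemma~\ref{lem:Haarper} has already done all the work. The only point deserving a brief check is the bookkeeping between the two conventions for Haar periods: Lemma~\ref{lem:Haarper} concludes $m_H((W-d)\triangle W)=0$, while Haar aperiodicity is stated in terms of $m_H((h+W)\triangle W)=0$. Writing $W-d=-d+W$ and using translation invariance of $m_H$ (or simply that $H_W^{Haar}$ is a group, so $d\in H_W^{Haar}$ iff $-d\in H_W^{Haar}$) shows that $d$ lies in $H_W^{Haar}$ in the sense of Haar aperiodicity as well, and the argument closes.
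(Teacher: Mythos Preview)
Your proof is correct and follows essentially the same route as the paper: pick preimages in $\hX_1$, apply Lemma~\ref{lem:Haarper} to obtain a Haar period $d$, and use Haar aperiodicity to force $d=0$. The extra remark about the sign convention for Haar periods is harmless bookkeeping; the paper simply states that $d$ is a Haar period and moves on.
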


\begin{proof}
Take $\hx,\hy\in \hX_{1}$ such that $\nuWG(\hx)=\nuWG(\hy)$.
Then by Lemma~\ref{lem:Haarper} we have $\nuW(\hy)=\sigma_d\nuW(\hx)$ for some Haar period $d$ of $W$. As $W$ is Haar aperiodic, we get $d=0$, that is $\nuW(\hx)=\nuW(\hy)$.
\end{proof}

\begin{proof}\textit{(Proof of Theorem \ref{theo:B1gen})}
$\piG_*$ is \oneone at $\QM$-a.a. $\nu\in\MW$ by Lemma~\ref{lem:Hap} and the fact that  $\hX_{1}$ has full $m_\hX$-measure by Remark \ref{rem:smg}. As $\hX_{1}$ is $\hT$-invariant, we conclude that $\piG_*:(\MW,\QM,S)\to(\MWG,\QMG,S)$ is a measure-theoretic isomorphism (observe the Lusin-Souslin theorem \cite[Thm.~15.1]{Kechris}). Also note that $\nuWG:(\hX,m_\hX, \hT)\to(\MW,\QM,S)$ is a measure-theoretic isomorphism by \cite[Thm.~2a)]{KR2015}. Here we use $m_H(W)>0$, which follows from Haar aperiodicity of $W$. Hence the claim is shown.
\end{proof}

\subsection{General windows}

Our proof of Theorem~\ref{theo:B2gen} proceeds by reduction to the Haar aperiodic case.
The construction of factoring out topological or measure-theoretic periods has been described in detail in Chapter 6 and 7 of \cite{KR19} for compact windows.  The same constructions can be used in the non-compact case. Since the group of Haar periods is closed, the quotient $\hXprime=\hX/\pihX(\cH_W^{Haar})$ is a compact abelian group.

\begin{proof}\textit{(Proof of Theorem~\ref{theo:B2gen})}
Assume first that $W=W_{inv}$. The set $W'=\varphi(W)$ is Haar aperiodic, see
Lemma~\ref{lemma:W'}.
Also note $(\MWG,\QMG,S)=(\MWGprime,\QMGprime,S)$, which follows with the same proof as in Proposition 6.10 in \cite{KR19}. Now the claim of the theorem follows from Theorem~\ref{theo:B1gen}.
In the general case, note that $(\MW,\QM,S)$ is measure-theoretically isomorphic to $(\cM,\QM,S)$. As the present theorem applies to the regularized window $W_{inv}$, it suffices to show that $\QM=m_\hX\circ (\nuW)^{-1}$ equals $\QMinv=m_\hX\circ (\nuWinv)^{-1}$ on $\cM$. But this follows from the observation
\begin{equation*}
\left\{\hx\in\hX:\nuW(\hx)\neq\nuWinv(\hx)\right\}
\subseteq
\pihX\left(\bigcup_{\ell\in\LL}\left((G\times(W\setminus W_{inv}))-\ell\right)\right)\ ,
\end{equation*}
and this is a set of $m_\hX$-measure zero, because $\LL$ is countable and $m_H(W\setminus W_{inv})=0$ by Lemma~\ref{lemma:W_inv}a).
\end{proof}

\section{Consequences for diffraction}\label{sec:Nicu-rel}

We discuss implications of our results for diffraction analysis of configurations, compare \cite{BaakeLenz2004, Lenz2009}. In particular, we discuss Besicovitch almost periodicity \cite{LSS2020a}, which links our approach to that in \cite{S20}. Whereas in the latter reference the Mirsky measure is constructed using Besicovitch almost periodic configurations, compare \cite[Thm.~6.13]{LSS2020a}, we take the Mirsky measure for granted and investigate when projections of Mirsky generic configurations are Besicovitch almost periodic. We assume that the reader is familiar with Remark 8.8 in \cite{KR19}, where the notions of autocorrelation measure, diffraction measure, diffraction spectrum and generic configuration are discussed in the present framework.

\smallskip

The link between dynamical and diffraction properties is well understood, see for example \cite[Sec.~6-8]{BaakeLenz2004}.  Let us specialise this to our needs.

\begin{fact}[\cite{BaakeLenz2004}, Sec.~6-8] \label{fact:1}
$(\MWG, \QMG, S)$ has  discrete dynamical $L^2$-spectrum. It also has pure point diffraction spectrum, i.e., its autocorrelation measure $\gamma_{\QMG}$, which is characterised via $\gamma_{\QMG}(c_1*c_2)=\QMG(\phi_{c_1}\cdot \phi_{c_2})$ for $c_1,c_2\in C_c(G)$, has a Fourier transform $\widehat{\gamma_{\QMG}}$ that is a point measure. The group $\SSS\subseteq \widehat G$ of dynamical eigenvalues of $(\MWG, \QMG, S)$ is generated by the set of Bragg peak positions, i.e., by those characters $\chi\in\widehat{G}$ for which $\widehat{\gamma_{\QMG}}(\{\chi\})\neq0$.
\end{fact}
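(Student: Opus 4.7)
The plan is to combine Theorem~\ref{theo:B2gen} with classical results relating dynamical and diffraction spectra. First I would observe that by Theorem~\ref{theo:B2gen} the measure-preserving system $(\MWG,\QMG,S)$ is conjugate to the ergodic group rotation $(\hXprime,m_{\hXprime},\hTprime)$. The Halmos--von Neumann theorem then delivers the pure discrete $L^2$-spectrum, and it identifies $\SSS$ as the image in $\widehat G$ of the Pontryagin dual $\widehat{\hXprime}$ under the map induced by the $G$-action; equivalently, $\SSS$ is the $\widehat G$-projection of the subgroup of $\widehat{G\times H}$ that annihilates $\LL+\pihX(\cH_W^{Haar})$.

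Second, I would construct $\gamma_{\QMG}$ by a standard argument. Since configurations in $\MWG$ have uniformly bounded local complexity, the map $c\mapsto\phi_c$ sends $C_c(G)$ into $L^2(\MWG,\QMG)$, and the sesquilinear form $(c_1,c_2)\mapsto\QMG(\phi_{c_1}\overline{\phi_{c_2}})$ is continuous, positive definite and $S$-invariant on $C_c(G)$. This forces the existence of a unique positive definite Radon measure $\gamma_{\QMG}$ on $G$ satisfying the stated identity, and Bochner's theorem makes $\widehat{\gamma_{\QMG}}$ a positive Radon measure on $\widehat G$.

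Third, Dworkin's argument produces an isometry from $L^2(\widehat G,\widehat{\gamma_{\QMG}})$ modulo constants into $L^2(\MWG,\QMG)$ that intertwines multiplication by characters of $\widehat G$ with the Koopman representation of $S$. Consequently $\widehat{\gamma_{\QMG}}$ is absolutely continuous with respect to the maximal spectral type of $(S,\QMG)$; by the first paragraph this type is purely atomic, giving pure point diffraction and the inclusion of the group generated by the Bragg peak positions into $\SSS$. The reverse inclusion is the point I expect to be the main obstacle: one must show that every dynamical eigenvalue appears as a Bragg peak. Here I would invoke the Baake--Lenz equivalence \cite[Thm.~7]{BaakeLenz2004}, whose hypothesis is the density of the algebra generated by the $\phi_c$ inside $L^2(\MWG,\QMG)$ modulo constants. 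This density can be checked from the explicit description of eigenfunctions of the group rotation $(\hXprime,\hTprime)$, using that characters of $\hXprime$ are approximated in $L^2(\QMG)$ by finite linear combinations of the $\phi_c$. Combining the three ingredients yields the claim of the fact.
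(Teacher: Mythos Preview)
Your proposal arrives at the right conclusions, but it takes a more elaborate and somewhat less robust route than the paper. The paper does \emph{not} invoke Theorem~\ref{theo:B2gen}; it uses only the weaker fact from \cite[Thm.~2]{KR2015} that $(\MWG,\QMG,S)$ is a \emph{factor} of the group rotation $(\hX,m_{\hX},\hT)$ via $\nuWG$. Discrete spectrum passes to factors, so this already gives the first claim without needing the full isomorphism (and without the side hypothesis $m_H(W)>0$ that Theorem~\ref{theo:B2gen} carries). The remaining assertions are then simply quoted from \cite[Thms.~7,~9]{BaakeLenz2004} together with the Dworkin computation in the proof of \cite[Thm.~5(a)]{BaakeLenz2004}.

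Your third step is where the real difference lies. You correctly identify that the reverse inclusion (every eigenvalue is generated by Bragg peaks) hinges on the density condition in \cite[Thm.~7]{BaakeLenz2004}, but your proposed verification---approximating characters of $\hXprime$ in $L^2(\QMG)$ by linear combinations of the $\phi_c$---is not substantiated and is the hard direction of what you are trying to prove. The standard (and much shorter) argument, which is what \cite{BaakeLenz2004} actually uses, is Stone--Weierstra\ss: the functions $\phi_c$, $c\in C_c(G)$, separate points of the vaguely compact space $\MWG$, so the unital $*$-algebra they generate is dense in $C(\MWG)$ and hence in $L^2(\MWG,\QMG)$. With that in hand your outline becomes correct; but as written, the density step is a gap rather than a mere reformulation.
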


\noindent Indeed, as $(\MWG, \QMG, S)$ is a factor of the system $(\hX,m_\hX, \hT)$ with factor map $\nuWG$, see \cite[Thm.~2]{KR2015}, and as the latter system has discrete dynamical spectrum, the same is true for $(\MWG, \QMG, S)$, and pure point diffraction spectrum as well as the remaining assertions  follow from  \cite[Thms.~7, \, 9]{BaakeLenz2004} and the Dworkin type calculation in the proof of Theorem 5 (a)  from \cite{BaakeLenz2004}.

\smallskip

In \cite{Lenz2009}, this link is analysed in more detail. Consider an eigenvalue $\chi\in \SSS$ and denote by $E_\chi$ the projection to the subspace of $L^2(\MWG, \QMG)$ generated by an eigenvector having eigenvalue $\chi$. If $c_\chi(\nuG):=(E_\chi \phi_{\overline{\chi}\cdot \sigma})(\nuG)$ does not vanish almost surely,  it gives a corresponding measurable eigenfunction, compare the proof of Theorem 3 in \cite{Lenz2009}. Here $\sigma\in C_c(G)$ is any function satisfying $m_G(\sigma)=1$.  Let us define $E_\chi=0$ if $\chi\notin\SSS$. Then, for any $\chi\in \widehat G$, the function $|c_\chi|$ is $\QMG$-almost surely constant by ergodicity.

\smallskip

Next, consider any van Hove sequence $\mathcal A=(A_n)_{n}$ in $G$.
For an individual configuration $\nuG\in \MWG$, the point part in its diffraction is often inferred from the so-called Fourier-Bohr coefficients along $\cA$, which are for $\chi\in \widehat G$ defined by
\begin{equation}\label{eq:FBlim}
a_\chi^{\mathcal A}(\nuG)= \lim_{n\to \infty} \frac{1}{m_G(A_n)} \int_{A_n} \overline{\chi(t)} \, {\rm d}\nuG(t) \ ,
\end{equation}
whenever that limit exists. We have the following result.

\begin{fact}[\cite{Lenz2009}, Thms.~3 and~5]\label{fact:L09}
Consider any $\chi\in \widehat G$. We then have $\widehat{\gamma_{\QMG}}(\{\chi\})=\langle c_\chi, c_\chi\rangle$, where $\langle \cdot , \cdot\rangle$ denotes the scalar product on $L^2(\MWG, \QMG)$.
Moreover, for any tempered van Hove sequence $\mathcal A=(A_n)_{n}$, the limit $a_\chi^{\mathcal A}$ along $\cA$ in Eq.~\eqref{eq:FBlim} exists in $L^2(\MWG,\QMG)$. In fact $a_\chi^{\mathcal A}=c_\chi$ holds $\QMG$-almost surely.  \qed
\end{fact}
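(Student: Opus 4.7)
My plan is to deduce both claims by the Dworkin-Lenz spectral-theoretic method applied to the Koopman representation $(U_gf)(\nu):=f(S_g\nu)$ on $L^2(\MWG,\QMG)$. The essential tool is the spectral theorem for unitary representations of locally compact abelian groups: to every $f\in L^2(\MWG,\QMG)$ it attaches a positive measure $\mu_f$ on $\widehat G$ with $\langle U_gf,f\rangle=\int_{\widehat G}\chi(g)\,d\mu_f(\chi)$ and $\mu_f(\{\chi\})=\|E_\chi f\|^2$ for every $\chi\in\widehat G$. I would combine this with the defining identity $\gamma_{\QMG}(c_1*c_2)=\QMG(\phi_{c_1}\phi_{c_2})$ and the elementary relation $U_g\phi_c=\phi_{T_gc}$ obtained by unwinding $S_g\nu=\nu\circ T_g^{-1}$.

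My first step would be the Fourier-Bohr identity, from which the point-mass identity drops out almost for free. Set $V_g:=\overline{\chi(g)}U_g$, which is again a unitary representation of $G$; its invariant subspace is precisely the $\chi$-eigenspace of $U$, so the orthogonal projection onto it is $E_\chi$. The $L^2$ mean ergodic theorem for amenable group actions along the van Hove sequence $\cA$ then gives $\tfrac{1}{m_G(A_n)}\int_{A_n}V_g\phi_{\overline\chi\sigma}\,dg\to c_\chi$ in $L^2(\QMG)$. A direct computation shows $U_g\phi_{\overline\chi\sigma}=\chi(g)\phi_{\overline\chi\cdot T_g\sigma}$, hence $V_g\phi_{\overline\chi\sigma}=\phi_{\overline\chi\cdot T_g\sigma}$, so the $n$-th twisted average equals $\phi_{\overline\chi\psi_n}$ with $\psi_n:=(1_{A_n}*\sigma)/m_G(A_n)$. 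The $n$-th Fourier-Bohr prelimit is $\phi_{\overline\chi\cdot 1_{A_n}/m_G(A_n)}$, so their difference is $\phi_{\overline\chi\rho_n}$ with $\rho_n:=\psi_n-1_{A_n}/m_G(A_n)$.

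The only nontrivial step, and the main obstacle, is to show that this boundary error vanishes in $L^2(\QMG)$. Note that $\rho_n$ is bounded by $1/m_G(A_n)$ and supported in a fixed compact thickening of $\partial A_n$. By Lemma~\ref{lem:moodprep} applied to $\overline{W}$, every $\nu\in\supp\QMG$ has uniformly discrete support and hence uniformly bounded density: there exist $C,K$ with $\nu(B)\le C\,m_G(B+K)$ for every Borel set $B\subseteq G$. Applied to $\supp\rho_n$ this gives a uniform estimate $|\phi_{\overline\chi\rho_n}(\nu)|\le C\,m_G(\partial A_n+K')/m_G(A_n)$, which tends to $0$ by the van Hove property of $\cA$. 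Hence the Fourier-Bohr prelimits converge to $c_\chi$ uniformly on $\supp\QMG$, and in particular in $L^2(\QMG)$; the $\QMG$-a.s.\ pointwise equality along tempered $\cA$ then follows by combining this uniform bound with the Lindenstrauss pointwise ergodic theorem applied to $V$, in the same spirit as Remark~\ref{rem:smg}. Finally, the identity $\widehat{\gamma_{\QMG}}(\{\chi\})=\langle c_\chi,c_\chi\rangle$ follows: the classical Dworkin identity gives $\widehat{\gamma_{\QMG}}(\{\chi\})=|a_\chi^{\cA}(\nu)|^2$ for any $\QMG$-generic $\nu$, and since $a_\chi^{\cA}=c_\chi$ almost surely while $|c_\chi|^2$ is $\QMG$-a.s.\ constant (being $S$-invariant by the eigenvector property $U_gc_\chi=\chi(g)c_\chi$ and $\QMG$ being ergodic), that constant equals $\int|c_\chi|^2\,d\QMG=\langle c_\chi,c_\chi\rangle$.
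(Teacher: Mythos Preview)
The paper does not prove this Fact; it is cited from \cite{Lenz2009} with a \qed. Your overall strategy---twisted mean ergodic theorem plus a van Hove boundary estimate comparing the mollified average to the raw Fourier--Bohr prelimit---is essentially the method of that reference, and your boundary estimate via Lemma~\ref{lem:moodprep} is correct.

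There is, however, a sign slip that derails the explicit computation. From $U_g\phi_c=\phi_{T_gc}$ and $S_g\nu=\nu\circ T_g^{-1}$ one is forced into the convention $(T_gc)(x)=c(x+g)$, and then $T_g(\overline\chi\,\sigma)=\overline{\chi(g)}\,\overline\chi\cdot T_g\sigma$, so $U_g\phi_{\overline\chi\sigma}=\overline{\chi(g)}\,\phi_{\overline\chi\cdot T_g\sigma}$, not $\chi(g)\,\phi_{\overline\chi\cdot T_g\sigma}$. Hence $V_g\phi_{\overline\chi\sigma}$ carries an extra factor $\overline{\chi(g)}^2$ and the $n$-th twisted average is \emph{not} $\phi_{\overline\chi\,\psi_n}$; the comparison with the Fourier--Bohr prelimit $\phi_{\overline\chi\cdot 1_{A_n}/m_G(A_n)}$ breaks down as written. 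The remedy is to run the ergodic average on $\phi_\sigma$ rather than on $\phi_{\overline\chi\sigma}$ (and then keep track of the resulting $\cA$ versus $-\cA$ and $\chi$ versus $\overline\chi$ discrepancies), which is how the argument is organised in \cite{Lenz2009}.

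More seriously, your last step is circular. You invoke ``the classical Dworkin identity'' for $\widehat{\gamma_{\QMG}}(\{\chi\})=|a_\chi^{\cA}(\nu)|^2$, but that equality is precisely the consistent phase property---item (iii) in Fact~\ref{fact:bap}---and is part of what Lenz's theorem establishes, not an input. Dworkin's argument only yields the spectral identity $\mu_{\phi_c}=|\widehat c|^2\,\widehat{\gamma_{\QMG}}$. The correct closure is the one you actually announced at the outset: combine $\mu_f(\{\chi\})=\|E_\chi f\|_2^2$ from the spectral theorem with $\mu_{\phi_c}=|\widehat c|^2\,\widehat{\gamma_{\QMG}}$, taking $c=\overline\chi\,\sigma$ so that $|\widehat c(\chi)|=|m_G(\sigma)|=1$; this gives $\widehat{\gamma_{\QMG}}(\{\chi\})=\|E_\chi\phi_{\overline\chi\sigma}\|_2^2=\langle c_\chi,c_\chi\rangle$ directly, without appealing to the Fourier--Bohr limit at all.
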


\begin{remark}\label{rem:ext}
An eigenvalue $\chi\in \SSS$ may satisfy  $\widehat{\gamma_{\QMG}}(\{\chi\})=0$, in which case $\chi$ is called an \textit{extinction position}. Extinction positions have been observed for the Fibonacci chain, see e.g.~\cite[Sec.~9.4.1]{BG2}, where they reflect an inflation symmetry of the underlying point set.
Note that by Fact~\ref{fact:L09}, if $\chi\in \SSS$ is not an extinction position, i.e., if $\chi\in \SSS$ is a Bragg peak position, then $\nuG \mapsto a_\chi^{\mathcal A}(\nuG)$ defines the eigenfunction $c_\chi$ for $\chi$. On the other hand, if $\chi \in \SSS$ is an extinction position, then by Fact~\ref{fact:1}, there exist $\chi_1,\ldots ,\chi_{k}, \chi_{k+1}, \ldots , \chi_n \in \SSS$ which are Bragg peak positions so that $\chi= \chi_1 \cdot\ldots \cdot\chi_{k} \cdot \chi_{k+1}^{-1} \cdot \ldots \cdot \chi_n^{-1}$. In this case, an eigenfunction ${\widetilde c}_\chi$ is given by ${\widetilde c}_\chi =c_{\chi_1}\cdot \ldots \cdot c_{\chi_k} \cdot \overline{c_{\chi_{k+1}}}\cdot\ldots \cdot \overline{c_{\chi_{n}}}
$.
\end{remark}

\begin{remark}\label{rem:dualgroup}
Note that Theorem~\ref{theo:B2gen} explicitly describes the group of eigenvalues of $(\MWG,\QMG,S)$,  compare \cite[Ch.~7]{Lenz2009}. To explain this, denote by $\LL^\circ\subseteq \widehat G\times \widehat H$ the annihilator of the lattice $\LL\subseteq G\times H$, which is isomorphic to the group dual to $\hX$. Further, denote by ${\LL^\circ}'\subseteq \LL^\circ$ those characters whose $\widehat H$-component is $H_W^{Haar}$-invariant, i.e., we have ${\LL^\circ}'=\LL^\circ \cap (\widehat G\times (H_W^{Haar})^\circ)$, where $(H_W^{Haar})^\circ\subseteq \widehat H$ is the annihilator of $H_W^{Haar}$. Note that ${\LL^\circ}'$ is isomorphic to the group dual to $\hXprime$. The same statement holds for its projection $\piGhat({\LL^\circ}')$, as $\LL^\circ$ projects injectively to $\widehat G$. Now Theorem~\ref{theo:B2gen} implies that the group of eigenvalues is $\piGhat({\LL^\circ}')$. Theorem~\ref{theo:B2gen} also provides a way to compute the eigenfunctions via the torus parametrisation map, compare the above discussion.
\end{remark}

Although the notion of Besicovitch almost periodicity for a measure is known for quite some time \cite{Lag00, G05, BM}, it has only recently systematically been studied, in conjunction with other types of almost periodicity \cite{LSS2020a}. Fix any van Hove sequence  $\mathcal A=(A_n)_{n}$ in $G$ and consider the seminorm $\|\cdot\|_{2,\mathcal A}$, which is   for any $f\in L^2_{loc}(G)\cap L^\infty(G)$ defined by
\begin{displaymath}
\|f\|_{2,\mathcal A} = \limsup_{n\to\infty} \left(\frac{1}{m_G(A_n)} \int_{A_n} |f(t)|^2 \, {\rm d}t\right)^{1/2} .
\end{displaymath}
Then $f$ is called \emph{ Besicovitch almost periodic along $\cA$} if $f$ can be approximated by trigonometric polynomials with respect to $\|\cdot\|_{2,\mathcal A}$, see Definition 3.1 and Proposition 3.7 in \cite{LSS2020a}. A translation bounded measure $\nuG\in \MG$ is called \emph{Besicovitch almost periodic along $\mathcal{A}$} if the function $\varphi*\nuG$ is Besicovitch almost periodic for any $\varphi\in C_c(G)$, see Definition~3.30 and Remark 3.31 in \cite{LSS2020a}. The space of Besicovitch almost periodic measures is denoted by $\Bap_{\mathcal A}(G)$. This space is important in mathematical diffraction theory as it characterises pure point diffractive measures in the following sense \cite[Thm.~3.36]{LSS2020a}. We recall the definition of the autocorrelation $\gamma_{\nuG}$ of $\nuG$ along $\cA$,
\begin{equation}\label{eq:ac}
\gamma_{\nuG}
:= \lim_{n\to\infty} \frac{1}{m_G(A_n)}
\nuG|_{A_n} * \widetilde{\nuG|_{A_n}} \ ,
\end{equation}
whenever that limit exists. Here measure reflection is defined by $\widetilde\mu(f)=\overline{\mu(\widetilde{f})}$, where $\widetilde{f}(x)=\overline{f(-x)}$.

\begin{fact}[cf. Theorem 3.36 in \cite{LSS2020a}] \label{fact:bap}
Fix any van Hove sequence $\mathcal A=(A_n)_{n}$ in $G$ and let $\nuG\in \MG$ be a translation bounded measure.  Then $\nuG\in \Bap_{\mathcal A}(G)$ if and only if the following properties hold.
\begin{itemize}
\item[(i)] $\nuG$ has autocorrelation  $\gamma_{\nuG}$ along $\cA$, and $\widehat{\gamma_{\nuG}}$ is a pure point measure.
\item[(ii)] The Fourier-Bohr coefficients $a_\chi^\cA(\nuG)$ along $\cA$ exist for all $\chi\in \widehat G$.
\item[(iii)] The consistent phase property $\widehat{\gamma_{\nuG}}(\{ \chi \})=|a_\chi^\cA(\nuG)|^2$ holds for all $\chi\in \widehat G$.
\end{itemize} \qed
\end{fact}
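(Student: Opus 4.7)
The plan is to prove both implications by reducing, via convolution $\nuG\mapsto\varphi*\nuG$ with $\varphi\in C_c(G)$, the measure-level assertion to the corresponding statement for bounded continuous functions on $G$, and then to exploit the Wiener--Parseval identity
\[
\|\varphi*\nuG\|_{2,\cA}^2 \;=\; \bigl(\widetilde{\varphi}*\varphi*\gamma_{\nuG}\bigr)(0),
\]
which holds whenever $\gamma_{\nuG}$ exists along $\cA$. By definition, $\nuG\in\Bap_{\cA}(G)$ means precisely that each $\varphi*\nuG$ is Besicovitch almost periodic as a function, i.e.\ a $\|\cdot\|_{2,\cA}$-limit of trigonometric polynomials, so the argument can be carried out at the level of these scalar-valued smoothings and then lifted back.

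For the direction $\nuG\in\Bap_{\cA}(G)\Rightarrow$(i)--(iii), I would fix $\varphi\in C_c(G)$ together with trigonometric polynomial approximants $P_N=\sum_j a_j^{(N)}\chi_j$ satisfying $\|\varphi*\nuG-P_N\|_{2,\cA}\to 0$. For a trigonometric polynomial the autocorrelation along $\cA$ obviously exists and is a pure point measure, so a Cauchy-type argument in the $\|\cdot\|_{2,\cA}$ seminorm, applied via the polarised Wiener identity, promotes this to existence of $\gamma_{\varphi*\nuG}$ along $\cA$ with pure point Fourier transform, and hence to existence of $\gamma_{\nuG}$ itself with $\widehat{\gamma_{\nuG}}$ pure point (since convolution with varying $\varphi\in C_c(G)$ separates points). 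The Fourier--Bohr coefficients $a_\chi^{\cA}(\nuG)$ exist because for each $P_N$ they are just the coefficients $a_j^{(N)}$, and convergence in $\|\cdot\|_{2,\cA}$ transfers at once to convergence of Fourier--Bohr coefficients. The consistent phase property is immediate for the $P_N$ (both sides equal $|a_j^{(N)}|^2$ at $\chi=\chi_j$) and passes to the limit.

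For the converse direction, assume (i)--(iii). Since $\widehat{\gamma_{\nuG}}$ is pure point I would enumerate its atoms $\chi_1,\chi_2,\ldots$ and, for fixed $\varphi\in C_c(G)$, form the partial sums $P_N:=\sum_{j=1}^{N}a_{\chi_j}^{\cA}(\nuG)\,\widehat{\varphi}(\chi_j)\,\chi_j$. Using (iii) together with the Parseval-type expansion
\[
\bigl(\widetilde{\varphi}*\varphi*\gamma_{\nuG}\bigr)(0) \;=\; \sum_\chi |\widehat{\varphi}(\chi)|^2\,\widehat{\gamma_{\nuG}}(\{\chi\}),
\]
and linearity of Fourier--Bohr coefficients, I would estimate $\|\varphi*\nuG-P_N\|_{2,\cA}^2$ by the tail $\sum_{j>N}|\widehat{\varphi}(\chi_j)|^2\,\widehat{\gamma_{\nuG}}(\{\chi_j\})$, which tends to zero as $N\to\infty$. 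The main obstacle I expect is establishing the Parseval identity above rigorously, since $\gamma_{\nuG}$ is only assumed to exist as a van Hove limit along $\cA$ rather than unconditionally. Handling this requires a careful truncation argument, combined with translation boundedness of $\nuG$ and a dominated-convergence step, to justify exchanging the van Hove limit with the summation over the countable support of $\widehat{\gamma_{\nuG}}$, and to control cross-terms $a_{\chi_j}^{\cA}(\nuG)\overline{a_{\chi_k}^{\cA}(\nuG)}$ for $j\ne k$ that arise when expanding the square.
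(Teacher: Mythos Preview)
The paper does not prove this statement at all: it is recorded as a \emph{Fact} with a terminal \qed, citing Theorem~3.36 of \cite{LSS2020a} as the source. There is therefore no proof in the paper to compare your proposal against.

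That said, your outline follows the standard route one would take to establish such a characterisation, and is broadly in the spirit of the argument in \cite{LSS2020a}. A couple of points deserve care if you want to turn the sketch into a proof. In the forward direction, passing from existence of $\gamma_{\varphi*\nuG}$ for all $\varphi\in C_c(G)$ to existence of the measure $\gamma_{\nuG}$ itself requires more than ``separating points'': you need polarisation to obtain convergence against all test functions of the form $\widetilde{\varphi_1}*\varphi_2$, then density of such functions in $C_c(G)$ together with the uniform translation-boundedness of the approximants $\frac{1}{m_G(A_n)}\nuG|_{A_n}*\widetilde{\nuG|_{A_n}}$ to conclude vague convergence. In the reverse direction, the obstacle you flag (justifying the Parseval expansion and controlling cross-terms) is real but is handled in \cite{LSS2020a} by first proving the function-level version of the equivalence and then lifting to measures; the cross-terms vanish because distinct characters are orthogonal in the Besicovitch seminorm, which follows directly from the definition of $a_\chi^\cA$ and does not require the autocorrelation.
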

We can now prove a strengthened version of Theorem 4.1 from \cite{S20}.
In order to simplify notation, we denote weighted model combs by
\begin{displaymath}
\nufG{h}(\hx)= \sum_{y\in (x+\LL)\cap (G\times H)}h(y_H)\cdot \delta_{y_G} \ .
\end{displaymath}
In particular we have $\nuWG(\hx)=\nufG{1_W}(\hx)$.

\begin{theorem}[cf. Theorem 4.1 from \cite{S20}]\label{thm.4.6}
Let $W \subseteq H$ be a relatively compact measurable window in some cut-and-project scheme $(G,H, \LL)$, where both $G$ and $H$ are second countable.  Let $\mathcal{A}=(A_n)_n$ be any van Hove sequence in $G$. Then the following hold.
\begin{enumerate}
  \item[(a)] $\nuWG(\hx)$ is Mirsky 1-generic along $-\cA$ if and only if $\nuWG(\hx)$ satisfies uniform distribution along $\cA$, i.e., if we have
  \[
  \lim_{n\to\infty} \frac{1}{m_G(A_n)} \nuWG(\hx)(A_n) = \dL \cdot m_H(W) \,.
  \]
\item[(b)] $\nuWG(\hx)$ is Mirsky 2-generic along $-\cA$ if and only if $\nuWG(\hx)$ has an autocorrelation $\gamma_{\nuWG(\hx)}$ along $\cA$ of the form
\[
\gamma_{\nuWG(\hx)}=\dL \cdot \nufG{c_W}(\tilde 0)\ ,
\]
where $c_W\in C_c(H)$ is the covariogram function of Eq.~\eqref{eq:cov}.
\item[(c)]
If $\nuWG(\hx)$ is Mirsky 2-generic along $-\cA$, then the Fourier transform of $\gamma_{\nuWG(\hx)}$  is given by
\[
\widehat{\gamma_{\nuWG(\hx)}}= \dL^2 \cdot  \sum_{\chi \in \piGhat(\LL^\circ)} \reallywidecheck{c_W}(\eta)  \cdot \delta_\chi \,,
\]
where $\eta\in \widehat H$ is uniquely determined by $(\chi,\eta)\in \LL^\circ$, and $\reallywidecheck{c_W}(\eta)=0$
if $\chi\in  \piGhat(\LL^\circ\setminus {\LL^\circ}')$, compare Remark~\ref{rem:dualgroup} for notation.
Observe also that $\reallywidecheck{c_W}(\eta)=|\reallywidecheck{1_W}(\eta)|^2$.

 \item[(d)] $\nuW(\hx)$ is Mirsky 1-generic  along $-\cA$ if and only if, for any $\chi \in \piGhat(\LL^\circ)$, the Fourier-Bohr coefficient of $\nuWG(\hx)$ along $\cA$ exists and is given by
\begin{equation}\label{eq:FB-coeff}
a_\chi^\cA(\nuWG(\hx))=
\dL \cdot \overline{\chi(x_G)} \cdot  \overline{\eta(x_H)} \cdot \reallywidecheck{1_W}(\eta) \, .
\end{equation}
Here $\hx=(x_G,x_H)+\LL$, and $\eta\in \widehat H$ is uniquely determined by $(\chi,\eta)\in \LL^\circ$, compare Remark~\ref{rem:dualgroup} for notation.

\item[(e)] Assume that $\nuWG(\hx)$ is Mirsky 2-generic along $-\cA$ and $\nuW(\hx)$ is Mirsky 1-generic  along $-\cA$. Then  $\nuWG(\hx)$ is Besicovitch almost periodic along $\cA$ if and only if $a_\chi^\cA(\nuWG(\hx))=0$ for all $\chi \in \widehat{G} \backslash \piGhat({\LL^\circ})$.

\end{enumerate}
\end{theorem}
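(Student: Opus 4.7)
The plan is to invoke Fact~\ref{fact:bap} and verify its three characterising conditions (i)--(iii) for $\nuWG(\hx)$, splitting $\widehat G$ into the part inside $\piGhat(\LL^\circ)$ and its complement; the stated hypothesis on vanishing Fourier--Bohr coefficients will exactly close the gap on the complement.

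First, under Mirsky $2$-genericity of $\nuWG(\hx)$ along $-\cA$, part (b) gives existence of the autocorrelation $\gamma_{\nuWG(\hx)}$ along $\cA$, and part (c) identifies $\widehat{\gamma_{\nuWG(\hx)}}$ as a pure point measure supported on $\piGhat(\LL^\circ)$. This delivers Fact~\ref{fact:bap}(i) without further input, and moreover pins down the Bragg weights as $\widehat{\gamma_{\nuWG(\hx)}}(\{\chi\})=\dL^2\cdot|\reallywidecheck{1_W}(\eta)|^2$ for $\chi\in\piGhat(\LL^\circ)$, via the identity $\reallywidecheck{c_W}=|\reallywidecheck{1_W}|^2$ included in the statement of (c).

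Next, on $\piGhat(\LL^\circ)$, Mirsky $1$-genericity of $\nuW(\hx)$ along $-\cA$ combined with part (d) yields existence of $a_\chi^\cA(\nuWG(\hx))$ together with the explicit formula~\eqref{eq:FB-coeff}. Taking absolute values and comparing with the previous paragraph gives $|a_\chi^\cA(\nuWG(\hx))|^2=\dL^2\cdot|\reallywidecheck{1_W}(\eta)|^2=\widehat{\gamma_{\nuWG(\hx)}}(\{\chi\})$, so Fact~\ref{fact:bap}(ii) and the consistent phase property (iii) hold automatically at every $\chi\in\piGhat(\LL^\circ)$, with no appeal to the hypothesis of (e).

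Finally, for $\chi\in\widehat G\setminus\piGhat(\LL^\circ)$, part (c) forces $\widehat{\gamma_{\nuWG(\hx)}}(\{\chi\})=0$, so that conditions (ii) and (iii) of Fact~\ref{fact:bap} at such $\chi$ collapse to the single requirement that $a_\chi^\cA(\nuWG(\hx))$ exists and equals zero---which is precisely the hypothesis of assertion (e). The converse direction then follows by assembling the preceding verifications into Fact~\ref{fact:bap}, and the forward direction is immediate since $\nuWG(\hx)\in\Bap_\cA(G)$ forces the consistent phase property at every $\chi$, so that on the complement of $\piGhat(\LL^\circ)$ one has $|a_\chi^\cA(\nuWG(\hx))|^2=\widehat{\gamma_{\nuWG(\hx)}}(\{\chi\})=0$. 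The proof is essentially a bookkeeping exercise once Fact~\ref{fact:bap} is paired with parts (b)--(d); the only mildly substantive point is the matching of Bragg weights with squared Fourier--Bohr amplitudes, which reduces to the covariogram identity $\reallywidecheck{c_W}=|\reallywidecheck{1_W}|^2$ already supplied in (c).
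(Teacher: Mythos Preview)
Your write-up addresses only part~(e), treating (b)--(d) as already established inputs; parts (a)--(d) are left untouched. For (e) itself, your argument is correct and is exactly the paper's approach: the paper's proof of (e) is the single sentence ``Part~(e) follows from (b)--(d) and Fact~\ref{fact:bap},'' and your text is a careful unpacking of precisely that sentence---verifying (i) from (b) and (c), verifying (ii) and (iii) on $\piGhat(\LL^\circ)$ from (d) together with the identity $\reallywidecheck{c_W}=|\reallywidecheck{1_W}|^2$, and observing that the hypothesis of (e) is what remains on the complement.

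The gap is simply coverage. Part~(a) is a restatement of Proposition~\ref{prop:Moodyext}(a). Part~(b) requires two ingredients: a Dworkin-type identity showing that the two displayed limits (ergodic average of $\phi_{c_1}\phi_{\overline{c_2}}$ versus the autocorrelation limit applied to $c_1*\widetilde{c_2}$) coincide whenever either exists, and a direct Weil-formula computation identifying $\QMG(\phi_{c_1}\phi_{\overline{c_2}})$ with $\dL\cdot(c_1*\widetilde{c_2}*\nufG{c_W}(\tilde 0))(0)$. Part~(c) is Poisson summation applied to the autocorrelation formula from (b), plus the observation that $\reallywidecheck{c_W}$ vanishes off $(H_W^{Haar})^\circ$ because $c_W$ is $H_W^{Haar}$-periodic. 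Part~(d) needs the character identity $\eta(y_H)=\chi(x_G)\eta(x_H)\overline{\chi(y_G)}$ for $y\in x+\LL$ and $(\chi,\eta)\in\LL^\circ$, which converts the Fourier--Bohr average into a weighted count of the form in Proposition~\ref{prop:Moodyext}(b); the converse direction of (d) also needs a Stone--Weierstrass density argument to pass from characters in $\piHhat(\LL^\circ)$ to arbitrary $\eta\in C_c(H)$. None of this is present in your proposal.
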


\begin{remark}[relation to dynamical diffraction]\label{rem:conclusions} The proof of Theorem~\ref{thm.4.6} (b) shows that the autocorrelation $\gamma_{\nuWG(\hx)}$ agrees with the autocorrelation $\gamma_{\QMG}$ of $(\MWG, \QMG, S)$ from Fact~\ref{fact:1} if and only if $\nuWG(\hx)$ is Mirsky 2-generic along $-\cA$.

In dynamical diffraction analysis, people often consider the hull $\overline{\{S_g\nuG:g\in G\}}$ associated to a configuration $\nuG$.  For any configuration $\nuWG(\hx)$ that is Mirsky generic along $-\cA$, its hull $\MWG(\hx)=\overline{\{S_g\nuWG(\hx): g\in G\}}$ has full Mirsky measure $\QMG(\MWG(\hx))=1$. This is seen as in the proof of \cite[Thm.~5c)]{KR2015}. Thus in that case, the systems $(\MWG(\hx), \QMG,S)$ and $(\MWG, \QMG,S)$ are measure-theoretically isomorphic.
\end{remark}

\begin{proof}\textit{(Proof of Theorem~\ref{thm.4.6})}
Part (a) is Proposition \ref{prop:Moodyext} (a).

\noindent
For part (b) abbreviate $\omegaG:=\nuWG(\hx)$ and recall that Mirsky 2-genericity of $\omegaG$ along $-\cA$ can be characterized as
\begin{equation*}
\lim_{n\to\infty} \frac{1}{m_G(A_n)} \int_{-A_n} \phi_{c_1}(S_g \omegaG)\cdot \phi_{\overline{c_2}} (S_g \omegaG)\, {\rm d}m_G(g)= \QMG(\phi_{c_1}\cdot \phi_{\overline{c_2}})
\end{equation*}
for all $c_1,c_2\in C_c(G)$, while the existence of an autocorrelation $\gamma_{\omegaG}$ along $\cA$ satisfying $(c_1*\widetilde{c_2}*\gamma_{\omegaG})(0)= \QMG(\phi_{c_1}\cdot \phi_{\overline{c_2}})$
for all $c_1,c_2\in C_c(G)$
is equivalent to
\begin{equation*}
\lim_{n\to\infty} \frac{1}{m_G(A_n)}
\left(\omegaG|_{A_n} * \widetilde{\omegaG|_{A_n}}\right)(c_1*\widetilde{c_2})
=\QMG(\phi_{c_1}\cdot \phi_{\overline{c_2}}) \quad(c_1,c_2\in C_c(G))\ .
\end{equation*}
The equality of these two limits, provided one of them exists, is shown in the proof of Theorem 5 (a) in \cite{BaakeLenz2004}. (Note that the latter equation appears in that paper on the last line of p.~1881.)
The identity $\QMG(\phi_{c_1}\phi_{\overline{c_2}})=
\dL\cdot(c_1*\widetilde{c_2}*\nufG{c_W}(\tilde 0))(0)$ can be checked
similarly to the calculation for two-point patterns in the proof of Remark 3.12 in \cite{KR2015}, compare \cite[Prop.~3]{Moody2002}.

\noindent Part (c) follows from (b) for $\chi \in \piGhat(\LL^\circ)$ instead of $\chi\in  \piGhat({\LL^\circ}')$, e.g.~by the Poisson Summation Formula as in \cite[Thm.~4.10]{RS17}.  Also note that, since $c_W$ is $H_W^{Haar}$-periodic, $\reallywidecheck{c_W}$ is supported inside $(H_W^{Haar})^\circ$, see e.g. \cite[Prop.~6.4]{BF}. Therefore, $\reallywidecheck{c_W}(\eta)=0$ if $\chi\in  \piGhat(\LL^\circ\setminus {\LL^\circ}')$.

\noindent Part (d) is a consequence of Proposition \ref{prop:Moodyext} (b). Indeed, for each $(\chi, \eta) \in \LL^\circ$, consider $y=x+\ell\in x+\LL$ and note
\begin{displaymath}
\eta(y_H)=\eta(x_H+\ell_H)=\chi(x_G) \overline{\chi(x_G)} \eta(x_H)\eta(\ell_H)=\chi(x_G)\eta(x_H)\overline{\chi(y_G)} \ ,
\end{displaymath}
where we used $\chi(\ell_G)\eta(\ell_H)=1$. Then
\begin{displaymath}
\begin{split}
&\frac{((\eta\circ \piH)\cdot \nuW(\hx))(A_n\times H)}{m_G(A_n)}
 = \frac{ \sum_{y \in (x+\mathcal L)\cap (A_n \times W)} \eta(y_H) }{m_G(A_n)} \\
&\hspace*{5mm}  =\chi(x_G)\eta(x_H) \frac{ \sum_{y \in (x+\mathcal L)\cap (A_n \times W)} \overline{\chi(y_G) }}{m_G(A_n)}
=
\frac{\chi(x_G)\eta(x_H)}{m_G(A_n)} \int_{A_n} \overline{\chi(t)} \, {\rm d}\nuWG(t) \ .
\end{split}
\end{displaymath}
Now, if $\nuW(\hx)$ is Mirsky 1-generic along $-\cA$,  Proposition~\ref{prop:Moodyext} (b) applied to any function $\psi \in C_c(H)$ that agrees with $\eta$ on $W$ gives \eqref{eq:FB-coeff}.
Conversely, assume that \eqref{eq:FB-coeff} holds for all $(\chi, \eta) \in \LL^\circ$. Then \eqref{eq:Moody} holds for all $\psi \in C_c(H)$ which agree on $W$ with some $\eta \in \pi^{\scriptscriptstyle  \widehat{H}}(\LL^\circ)$, and hence for all linear combinations of such functions. The density of $\pi^{\scriptscriptstyle  \widehat{H}}(\LL^\circ)$ in $\widehat{H}$ implies that the set
\begin{align*}
A&:= \{ \psi \in C_c(H) : \exists n \in \mathbb N, c_1, \ldots, c_n \in \mathbb C, (\chi_1,\eta_1), \ldots, (\chi_n,\eta_n) \in \LL^0 \\
&\mbox{ such that } \psi(h)= \sum_{k=1}^n c_k \eta_k(h) \text{ for all } h \in W \}
\end{align*}
is an algebra separating the points and hence is dense in $C_0(H)$. This immediately implies that \eqref{eq:Moody} holds for all $\eta \in C_c(H)$, giving Mirsky 1-generiticity for
 $\nuW(\hx)$.

\noindent Part (e) follows from (b)-(d) and Fact~\ref{fact:bap}.
\end{proof}

\section{A class of examples}\label{sec:examples}

This section focusses on cut-and-project schemes $(G,H,\LL)$ with relatively compact Borel window $W'=W\setminus V$, where $V,W\subseteq H$ are compact sets satisfying $V\subseteq W$. Within that setting, one may construct configurations that illustrate the statements of Theorem~\ref{theo:B2gen} and Theorem~\ref{thm.4.6}, without having a window being compact modulo $0$ or without being of extremal density.

\subsection{Results for the general setting}\label{genset}

In order to apply Theorem~\ref{theo:B2gen}, one needs to determine the Haar periods of $W'$. In that context, the following notion appears to be relevant.

\begin{definition}[Haar thinness]
Let $H$ be an LCA group with Haar measure $m_H$. Consider Borel sets $V\subseteq W\subseteq H$. We say that $V$ is \textit{Haar thin} in $W$ if for all open $U\subseteq H$ such that $m_H(U\cap V)>0$ we have $m_H(U\cap V)<m_H(U\cap W)$.
\end{definition}

\begin{lemma}\label{lemma:Hthin-cm0}
If $V$ is Haar thin in $W$ and $m_H(V)>0$, then $W\setminus V$ is not compact modulo $0$.
\end{lemma}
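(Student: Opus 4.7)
The plan is a direct proof by contradiction, exploiting the fact that a compact-modulo-$0$ description of $W\setminus V$ would force $V$ to be ``concentrated'' inside the compact approximant, whose open complement would then witness a failure of Haar thinness.

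First I would suppose that $W\setminus V$ is compact modulo $0$, so that there exist a compact $K\subseteq H$ and a Borel set $N\subseteq H$ with $m_H(N)=0$ such that $W\setminus V=K\triangle N$. From $K\subseteq (W\setminus V)\cup N$ and $V\cap(W\setminus V)=\varnothing$ it follows that $m_H(K\cap V)\le m_H(N)=0$. This is the key geometric consequence: $V$ essentially avoids the compact piece $K$, so it lives almost entirely in the open complement $U:=H\setminus K$.

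Next I would apply Haar thinness to the open set $U$. Since $m_H(U\cap V)=m_H(V)-m_H(K\cap V)=m_H(V)>0$, the definition of Haar thinness yields the strict inequality
\begin{equation*}
m_H(U\cap V)<m_H(U\cap W)\ .
\end{equation*}
But using $V\subseteq W$ and decomposing $W\cap U=W\setminus K$ into its disjoint pieces $V\setminus K$ and $(W\setminus V)\setminus K$, I note that $(W\setminus V)\setminus K\subseteq N$ has measure zero, so
\begin{equation*}
m_H(U\cap W)=m_H(V\setminus K)+m_H((W\setminus V)\setminus K)=m_H(V)=m_H(U\cap V)\ .
\end{equation*}
This contradicts the strict inequality above, completing the proof.

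There is no real obstacle: the argument is essentially the observation that if $W\setminus V$ coincides modulo a null set with a closed set $K$, then the open set $U=H\setminus K$ carries all the mass of $V$ but no extra mass from $W\setminus V$, which is exactly the configuration Haar thinness forbids. The only minor subtlety is remembering to use that $K$ is closed (hence its complement open) rather than merely measurable, so that Haar thinness can legitimately be invoked with $U=K^c$.
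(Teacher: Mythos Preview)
Your proof is correct and follows essentially the same approach as the paper: both argue by contradiction and exploit the single open set $U=H\setminus K$ to force a violation of Haar thinness. The paper phrases it slightly differently---first establishing that $m_H(U\cap K)=0$ iff $m_H(U\cap W)=0$ for every open $U$, then invoking second countability to conclude $K=W$ modulo $0$ and hence $m_H(V)=0$---whereas you go directly to $U=H\setminus K$ and compute $m_H(U\cap V)=m_H(U\cap W)=m_H(V)$; your route is a bit more streamlined and avoids the appeal to second countability.
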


\begin{proof}
Suppose for a contradiction that $W\setminus V=K$ modulo $0$ for some compact $K\subseteq H$. For any open $U\subseteq H$, by Haar thinness
$m_H(U\cap K)=m_H(U\cap W\setminus V)=0$ if and only if $m_H(U\cap W)=0$.
As $H$ is second countable, this implies
$K=W$ modulo $0$, i.e. $m_H(V)=0$ in contradiction to the assumption $m_H(V)>0$.
\end{proof}

\begin{lemma}[Haar periods]\label{lem:Hthin}
Let $H$ be an LCA group with Haar measure $m_H$. Let $W\subseteq H$ be compact and assume that the Borel set $V\subseteq W$ is Haar thin in $W$. Then $W'=W\setminus V$ satisfies
\begin{displaymath}
H_{W'}^{Haar}=H_{W}^{Haar}\cap H_{V}^{Haar} \ .
\end{displaymath}
\end{lemma}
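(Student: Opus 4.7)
The plan is to prove the two inclusions separately. The inclusion $\supseteq$ is routine: since $V\subseteq W$ we have the pointwise identity $1_{W'}=1_W-1_V$, so for any $h\in H_W^{Haar}\cap H_V^{Haar}$ the summands in
\[
T_h 1_{W'}-1_{W'}=(T_h 1_W-1_W)-(T_h 1_V-1_V)
\]
have $L^1$-norm zero, which gives $h\in H_{W'}^{Haar}$.

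For the nontrivial direction, suppose $h\in H_{W'}^{Haar}$. First I would deduce from the same identity, applied in $L^1$, that $T_h 1_W-1_W=T_h 1_V-1_V$ holds almost everywhere, and consequently that $W\setminus(W+h)=V\setminus(V+h)$ up to an $m_H$-null set.

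The key step is to probe Haar thinness through a carefully chosen open set. I would take $U:=H\setminus(W+h)$, which is open because $W$ is compact. Splitting the exact disjoint union $W+h=(V+h)\sqcup(W'+h)$ and using $W'+h=W'$ almost everywhere together with $V\cap W'=\varnothing$, one sees that
\[
m_H(V\cap(W+h))=m_H(V\cap(V+h))=c_V(h),
\]
so $m_H(U\cap V)=m_H(V)-c_V(h)=m_H(V\setminus(V+h))$. On the other hand $m_H(U\cap W)=m_H(W\setminus(W+h))$, and the a.e.\ identity of symmetric differences from the previous paragraph forces $m_H(U\cap V)=m_H(U\cap W)$. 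The definition of Haar thinness is only compatible with $m_H(U\cap V)=0$, so $m_H(V\triangle(V+h))=0$ and $h\in H_V^{Haar}$; using the indicator identity once more then yields $h\in H_W^{Haar}$ as well.

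The main obstacle I anticipate is spotting that Haar thinness must be applied through an open probe set and selecting the right one. The choice $U=(W+h)^c$ works precisely because the $h$-invariance of $W'$ forces $U\cap W$ to sit inside $V$ up to a null set, so that $U$ ``sees the same'' of $V$ and of $W$, putting us into the boundary case of Haar thinness.
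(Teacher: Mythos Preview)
Your proof is correct and follows essentially the same route as the paper: both arguments reduce the hard inclusion to the equality $m_H(U\cap V)=m_H(U\cap W)$ for the open set $U$ complementary to a translate of the compact set $W$, and then invoke Haar thinness to force $m_H(U\cap V)=0$. The only cosmetic difference is that you work with the indicator identity $1_{W'}=1_W-1_V$ to obtain $W\setminus(W+h)=V\setminus(V+h)$ a.e.\ in one stroke, whereas the paper derives the corresponding measure equality by decomposing $(W+h)\setminus W$ and $(V+h)\setminus V$ directly and then appeals to shift invariance before applying Haar thinness.
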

\begin{proof}
Recall $h\in H_W^{Haar}$ if and only if $m_H((W+h)\setminus W)=0$. The inclusion $H_{W}^{Haar}\cap H_{V}^{Haar}\subseteq H_{W'}^{Haar}$ can be inferred from the standard estimate $(W'+h)\setminus W'\subseteq ((W+h)\setminus W) \cup (V\setminus (V+h))$. For the reverse inclusion fix arbitrary $h\in H_{W'}^{Haar}$ and note
\begin{displaymath}
\begin{split}
m_H((W+h)\setminus W)&= m_H((W'+h)\setminus W)+ m_H((V+h)\setminus W)= m_H((V+h)\setminus V)\ ,
\end{split}
\end{displaymath}
where we used $m_H((V+h)\cap W')=m_H((V+h)\cap (W'+h))=0$ in the second equation.
To conclude the argument, note first $0=m_H(W'\setminus W)=m_H((W'+h)\setminus W)$. As $V$ is Haar thin in the compact set $W$, by shift invariance of $m_H$ this implies $0=m_H((V+h)\setminus W)=m_H((W+h)\setminus W)$. Hence $h \in H_W^{Haar}\cap H_V^{Haar}$.
\end{proof}

One may now consider examples $\nufG{W'}(\hx)$ constructed from maximal density configurations  $\nufG{W}(\hx)$ and  $\nufG{V}(\hx)$. As their diffraction can be explicitly computed in particular examples such as $k$-free integers, see e.g.~the references given in \cite[Sec.~5]{HuckRichard15}, these may serve to illustrate the statements in Theorem~\ref{thm.4.6}.

\begin{lemma}[diffraction]\label{lemma:generic}
Let $(G,H,\LL)$ be a cut-and-project scheme with two compact windows $V\subseteq W\subseteq H$. Assume that, for given $\hx\in \hX$, both $\nufG{V}(\hx)$ and $\nufG{W}(\hx)$ have maximal density along the same averaging sequence, i.e., there exists  a van Hove sequence $\cA=(A_n)_n$ in $G$  such that
\begin{displaymath}
\begin{split}
\lim_{n\to\infty} \frac{1}{m_G(A_n)}\nufG{V}(\hx)(A_n)&=\dL\cdot m_H(V) \ ,\\
\lim_{n\to\infty} \frac{1}{m_G(A_n)}\nuWG(\hx)(A_n)&=\dL\cdot m_H(W) \ .
\end{split}
\end{displaymath}
Consider $W'=W\setminus V$. Then the following hold.
\begin{itemize}
  \item[(a)]  $\nufG{W'}(\hx)$ is Besicovitch almost periodic along $\cA$.
  \item[(b)] $\nufG{W'}(\hx)$ has Fourier--Bohr coefficients along $\cA$ given by 
  \[
a_\chi^\cA(\nufG{W'}(\hx))=
\left\{\begin{array}{cc}
\dL \cdot \overline{\chi(x_G)} \cdot  \overline{\eta(x_H)} \cdot \reallywidecheck{1_{W'}}(\eta)  & \mbox{ if } \chi \in \piGhat({\LL^\circ})\\
 0  & \mbox{ otherwise }
\end{array} \right. \ .
\]
  \item[(c)] $\nufG{W'}(\hx)$ has autocorrelation and  diffraction along $\cA$ given by 
\[
\gamma_{\nufG{W'}(\hx)}=\dL \cdot \nufG{c_{W'}}(\tilde 0) \ , \qquad \widehat{\gamma_{\nufG{W'}(\hx)}}= \dL^2 \cdot  \sum_{\chi \in \piGhat({\LL^\circ}')} \reallywidecheck{c_{W'}}(\eta)  \cdot \delta_\chi \ .
\]

\end{itemize}

\end{lemma}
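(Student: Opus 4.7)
The plan is to reduce every statement to the two compact reference windows $V$ and $W$ by exploiting the additive decomposition $\nuWGprime(\hx)=\nuWG(\hx)-\nuVG(\hx)$ together with its $G\times H$-analogue $\nuWprime(\hx)=\nuW(\hx)-\nuV(\hx)$, which are valid because $V\subseteq W$, and then to assemble the conclusions for $W'$ by linearity.

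First I would upgrade the two maximal-density hypotheses to full Mirsky genericity. Compactness of $V,W$ gives $m_H(V)=m_H(\overline V)$ and $m_H(W)=m_H(\overline W)$, so Remark~\ref{rem:cmg} yields Mirsky genericity of $\nuV(\hx)$ and $\nuW(\hx)$ along $-\cA$ on $G\times H$, hence Mirsky $k$-genericity for every $k$. Inserting the criterion \eqref{eq:Moody} of Proposition~\ref{prop:Moodyext}(b) for $V$ and for $W$ into the identity $\nuWprime=\nuW-\nuV$ gives the same criterion for $W'$, so $\nuWprime(\hx)$ is Mirsky $1$-generic along $-\cA$ and, via continuity of $\piG_*$, so is $\nuWGprime(\hx)$. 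Theorem~\ref{thm.4.6}(d) applied to $W'$ then produces the explicit formula for $a_\chi^{\cA}(\nuWGprime(\hx))$ at each $\chi\in\piGhat(\LL^\circ)$.

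Next I would show that the two reference configurations $\nuVG(\hx)$ and $\nuWG(\hx)$ are themselves Besicovitch almost periodic along $\cA$. Mirsky $2$-genericity on $G$ and Mirsky $1$-genericity on $G\times H$ for these compact windows are inherited from the full Mirsky genericity obtained above, so the only hypothesis of Theorem~\ref{thm.4.6}(e) that still has to be checked for $V$ and $W$ is the vanishing of $a_\chi^{\cA}$ at characters $\chi\notin\piGhat(\LL^\circ)$; this is the standard Weyl-type equidistribution for compact-window model combs, available either via the strong-almost-periodicity results in~\cite{LSS2020a} or through a density argument based on $\pi^{\widehat H}(\LL^\circ)\subseteq\widehat H$. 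Part~(a) then follows immediately because $\Bap_{\cA}(G)$ is closed under subtraction. Part~(b) drops out by linearity of the Fourier--Bohr functional: Theorem~\ref{thm.4.6}(d) for $V$ and $W$ together with $\reallywidecheck{1_W}(\eta)-\reallywidecheck{1_V}(\eta)=\reallywidecheck{1_{W'}}(\eta)$ gives the formula on $\piGhat(\LL^\circ)$, while both summands vanish off $\piGhat(\LL^\circ)$.

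Part~(c) then follows from Fact~\ref{fact:bap}: BAP of $\nuWGprime(\hx)$ guarantees that the autocorrelation along $\cA$ exists, the diffraction is a pure point measure, and the consistent-phase identity $\widehat{\gamma_{\nuWGprime(\hx)}}(\{\chi\})=|a_\chi^{\cA}(\nuWGprime(\hx))|^2$ holds. Combining this with part~(b), the factorisation $|\reallywidecheck{1_{W'}}(\eta)|^2=\reallywidecheck{c_{W'}}(\eta)$, and the $H_{W'}^{Haar}$-invariance of $c_{W'}$ (which forces $\reallywidecheck{c_{W'}}(\eta)=0$ for $\eta\notin(H_{W'}^{Haar})^\circ$) collapses the Bragg sum to $\chi\in\piGhat({\LL^\circ}')$ and produces the stated diffraction. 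The autocorrelation $\gamma_{\nuWGprime(\hx)}=\dL\cdot\nufG{c_{W'}}(\tilde 0)$ is then recovered by recognising the right-hand side's Fourier transform through the Poisson-summation calculation in the proof of Theorem~\ref{thm.4.6}(c) and invoking injectivity of the Fourier transform on translation-bounded measures. The hard step in this plan is the vanishing of the reference Fourier--Bohr coefficients off $\piGhat(\LL^\circ)$: Proposition~\ref{prop:Moodyext} only feeds $H$-side test functions from $C_c(H)$, and characters $\chi\notin\piGhat(\LL^\circ)$ admit no compactly supported partner on $H$ via $\LL^\circ$, so a genuine almost-periodicity input beyond Moody's theorem is required at this point.
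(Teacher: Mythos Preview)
Your overall strategy coincides with the paper's: exploit the additive decomposition $\nufG{W'}(\hx)=\nuWG(\hx)-\nufG{V}(\hx)$, show the two summands are Besicovitch almost periodic with known Fourier--Bohr coefficients, and conclude by linearity and Fact~\ref{fact:bap}. The difference is one of economy. The paper bypasses the Mirsky-genericity detour entirely: it invokes \cite[Prop.~3.39]{LSS2020a} (maximal density for a compact window $\Rightarrow$ Besicovitch almost periodic) to get part~(a) in one line, and \cite[Cor.~3.40]{LSS2020a} to obtain the full Fourier--Bohr formula for $\nuWG(\hx)$ and $\nufG{V}(\hx)$, \emph{including} the vanishing off $\piGhat(\LL^\circ)$, so part~(b) is again pure linearity. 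Part~(c) is then argued exactly as you outline, from (a), (b), Fact~\ref{fact:bap}, and the Poisson summation/double-transformability results of \cite{RS17}.

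Your route through Remark~\ref{rem:cmg}, Proposition~\ref{prop:Moodyext}(b) and Theorem~\ref{thm.4.6}(d),(e) is correct as far as it goes, but it is circuitous: Theorem~\ref{thm.4.6}(e) only gives the equivalence ``BAP $\Leftrightarrow$ off-spectrum vanishing'', so you still have to supply one of the two sides from outside, and you correctly identify this as the hard step requiring an almost-periodicity input from \cite{LSS2020a}. But once you accept that input, \cite[Prop.~3.39]{LSS2020a} already hands you BAP directly from the maximal-density hypothesis, making the passage through Mirsky genericity and Theorem~\ref{thm.4.6} superfluous for parts~(a) and~(b). In short: your plan is sound, but the paper reaches the same destination by citing the relevant \cite{LSS2020a} results up front rather than rediscovering their content through the internal machinery.
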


\begin{remark}[Mirsky genericity]\label{rem:approx}
In conjunction with Theorem~\ref{thm.4.6}, the previous result implies that $\nu_{W'}(\hx)$ is  Mirsky 1-generic along $-\cA$, and that $\nufG{W'}(\hx)$ is Mirsky 2-generic along $-\cA$. Using approximation by regular model sets as in \cite{BHS, S20}, one may in fact show that $\nufG{W'}(\hx)$ is Mirsky generic along $-\cA$, without resorting to Besicovitch almost periodicity.
\end{remark}

\begin{proof} \textit{(Proof of Lemma~\ref{lemma:generic})}
By \cite[Prop.~3.39]{LSS2020a}, both $\nufG{W}(\hx)$ and $\nufG{V}(\hx)$ are Besicovitch almost periodic. Hence $\nufG{W'}(\hx)=\nufG{W}(\hx)-\nufG{V}(\hx)$ is Besicovitch almost periodic, compare \cite[Prop.~3.8]{LSS2020a}. This proves (a).

\noindent As to part (b), note that we have 
\[
a_\chi^\cA(\nufG{W'}(\hx))= a_\chi^\cA(\nufG{W}(\hx))-a_\chi^\cA(\nufG{V}(\hx)) \ .
\]
Hence (b) follows from \cite[Cor.~3.40]{LSS2020a} applied to $\nufG{W}(\hx)$ and $\nufG{V}(\hx)$.

\noindent The proof of part (c) only uses the validity of parts (a) and (b) but not the maximal density assumptions of the lemma:
The diffraction formula 
\[
\widehat{\gamma_{\nufG{W'}(\hx)}}= \dL^2 \cdot  \sum_{\chi \in \piGhat({\LL^\circ})} \reallywidecheck{c_{W'}}(\eta)  \cdot \delta_\chi
\]
follows from (a), (b) and Fact~\ref{fact:bap}. On the other hand, Lemma~3.6 and Theorem~4.10 from~\cite{RS17} give that $\gamma=\dL \cdot \nufG{c_{W'}}(\tilde 0)$ is Fourier transformable and
\[
\widehat{\gamma} =  \dL^2 \cdot  \sum_{\chi \in \piGhat({\LL^\circ})} \reallywidecheck{c_{W'}}(\eta)  \cdot \delta_\chi   \ .
\]
We thus get $\gamma=\gamma_{\nufG{W'}(\hx)}$ from double Fourier transformability \cite[Thm.~4.12]{RS17}. Since $c_{W'}$ is $H_{W'}^{Haar}$-periodic, we can restrict the summation over  $\piGhat({\LL^\circ})$ to  $\piGhat({\LL^\circ}')$. This proves (c). 
\end{proof}

\subsection{An example from $\BB$-free sets}

Assume that $H$ is compact. Then any weak model set is a subset of the lattice $\Lambda_H$. The trivial choice $W=H$ leads to comparing a weak model set $\Lambda_V$ to its lattice complement $\Lambda_{H\setminus V}$. This applies to so-called sets of multiples, which are usually studied dynamically through their complementary  $\BB$-free sets, see \cite{BKKL2015} and  \cite{Ka-Ke-Le}.

For an example beyond extremal density, let us consider the set of cube-free integers that are not square-free. An appropriate cut-and-project scheme $(\Z, H, \mathcal{L})$ has compact internal space $H= \prod_{p \in \mathcal{P}} \Z/(p^3\Z)$, with $\mathcal P$ denoting the set of all primes. Moreover $\mathcal{L}= \{ (n, \Delta(n) ) : n \in \Z \}$, where $\Delta : \Z \to H$ denotes the natural embedding $ \Delta(n)=(n,n,n,\ldots)\in H$. Consider the compact sets  $V \subseteq W\subseteq H$  given by
\begin{displaymath}
W= \prod_{p \in \mathcal{P}}\left( \Z/(p^3\Z) \setminus \{ 0 \} \right) \ , \qquad
V= \prod_{p \in \mathcal{P}}\left( \Z/(p^3\Z) \setminus \{ 0, p^2, 2 p^2, \ldots, (p-1)p^2 \} \right) \ .
\end{displaymath}
Then $\Lambda_{V}$ and $\Lambda_{W}$ are the sets of square-free integers and cube-free integers, respectively, and $\Lambda_{W'}$ is the set of integers that are cube-free but not square-free.

\smallskip

Note  that $W'$ is Haar aperiodic and fails to be compact modulo $0$. Together with  Lemma~\ref{lem:Hthin} and Lemma~\ref{lemma:Hthin-cm0}, this is an immediate consequence of the following result. Recall that $W$ is Haar regular if $U\cap W\ne\varnothing$ implies $m_H(U\cap W)>0$ for any open $U\subseteq H$, see \cite[Def.~3.10]{KR19}.

\begin{lemma}\label{lem:ht}
Both $V$ and $W$ are Haar regular, and $V$ is Haar thin in $W$. Moreover $W$ is Haar aperiodic.
\end{lemma}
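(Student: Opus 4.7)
The plan is to exploit the product structure throughout: $H=\prod_{p\in\mathcal P}H_p$ with $H_p=\Z/(p^3\Z)$ carrying the normalized counting measure, so that Haar measure on $H$ is the product measure. Both $W=\prod_pW_p$ and $V=\prod_pV_p$ are product Borel sets with $m_{H_p}(W_p)=(p^3-1)/p^3$ and $m_{H_p}(V_p)=(p^3-p)/p^3$. The two workhorse facts, both ultimately tied to $\sum_p p^{-2}<\infty$, are strict positivity of the infinite products $\prod_pm_{H_p}(W_p)$ and $\prod_pm_{H_p}(V_p)$.

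First, for Haar regularity of $W$ and of $V$, I would take any nonempty open $U\subseteq H$ meeting the set, shrink it to a basic product cylinder $U'=\prod_{p\in F}U_p\times\prod_{p\notin F}H_p$ with $F$ finite that still meets the set, and observe that $U'\cap W$ is again a product set whose measure factors into a finite product of strictly positive factors (each $H_p$ being discrete, a nonempty intersection has measure at least $p^{-3}$) and a convergent tail product $\prod_{p\notin F}m_{H_p}(W_p)>0$; the same argument handles $V$.

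Next, for Haar thinness of $V$ in $W$, I would show that every open $U$ with $m_H(U\cap V)>0$ also has $m_H(U\cap(W\setminus V))>0$. Shrinking $U$ to a basic cylinder $U'$ based on a finite set $F$ of primes with $m_H(U'\cap V)>0$, I pick any prime $q\notin F$ (possible because $\mathcal P$ is infinite) and restrict the $q$-th coordinate to the nonempty set $W_q\setminus V_q=\{q^2,2q^2,\ldots,(q-1)q^2\}$. The resulting cylinder sits inside $U\cap(W\setminus V)$ and has positive Haar measure by the same infinite-product argument.

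Finally, for Haar aperiodicity of $W$, the central computation is the exact product formula
\begin{equation*}
m_H\bigl((h+W)\cap W\bigr)=\prod_p\alpha_p(h_p),
\end{equation*}
coming from the elementary identity $(h_p+W_p)\cap W_p=H_p\setminus\{0,h_p\}$, so that $\alpha_p(0)=(p^3-1)/p^3$ while $\alpha_p(h_p)=(p^3-2)/p^3$ whenever $h_p\neq0$. The ratio $m_H((h+W)\cap W)/m_H(W)$ therefore reduces to $\prod_{p\,:\,h_p\neq0}(p^3-2)/(p^3-1)$, a product of factors strictly less than $1$. Any $h\neq0$ in $H$ has at least one nonzero coordinate, so this ratio is strictly less than $1$ and $m_H((h+W)\triangle W)=2(m_H(W)-m_H((h+W)\cap W))>0$. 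The mildest obstacle I anticipate is keeping the infinite-product manipulations rigorous when $h$ has infinitely many nonzero coordinates, but this is handled by bounding the numerator product termwise by the convergent positive denominator product.
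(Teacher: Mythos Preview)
Your proof is correct. For Haar regularity and Haar thinness you do exactly what the paper does: reduce to basic product cylinders and exploit the product structure of $m_H$ together with the convergence of the tail products $\prod_p(1-p^{-3})$ and $\prod_p(1-p^{-2})$.

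The one genuine difference is the argument for Haar aperiodicity of $W$. The paper does not compute $m_H((h+W)\cap W)$ at all; instead it observes that $W$ is \emph{aperiodic} in the strict sense (if $h_p+W_p=W_p$ then $H_p\setminus\{h_p\}=H_p\setminus\{0\}$, so $h_p=0$), and then invokes the general principle from \cite[Rem.~3.12]{KR19} that for a compact Haar regular set the period group and the Haar period group coincide. Your route is a direct product computation: $m_H((h+W)\cap W)/m_H(W)=\prod_{p:h_p\neq0}(p^3-2)/(p^3-1)<1$ whenever $h\neq0$. Your approach is more elementary and entirely self-contained, requiring no external reference; the paper's approach is shorter once Haar regularity is in hand and highlights a reusable structural fact (Haar regularity upgrades aperiodicity to Haar aperiodicity). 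Both are perfectly valid here.
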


\begin{proof}
Both Haar regularity and Haar thinness can be checked by restricting to open cylinder sets $U_S(h)\subseteq H$ as defined in \cite{Ka-Ke-Le}, for $h\in H$ and finite $S\subset\{p^3:p\in\cP\}$. But for those cylinder sets the claims are obvious due to the product structure of $m_H$. For Haar regularity of $W$, note $m_H(W)>0$ by positive density of cube-free integers. Thus $m_H(U_S(h)\cap W)>0$ for $h\in W$, as intersecting by $U_S(h)$ affects only finite many coordinates. An analogous argument shows Haar regularity of $V$. A similar argument also shows Haar thinness, noting that $m_H(W)>0$ implies $m_H(W')>0$. As $W$ clearly is aperiodic, Haar aperiodicity follows from Haar regularity by \cite[Rem.~3.12]{KR19}.
\end{proof}

Note further that both $\Lambda_V$ and $\Lambda_W$ are weak model sets of maximal density with respect to $A_n=[-n,n]$, see e.g.~\cite[Sec.~5.2]{HuckRichard15} and references therein.  Moreover the window $W'$ satisfies $(W')^\circ=\varnothing$ and $\overline{W'}=W$. For the latter claim, note that due to $\overline{W'}\subseteq W=V\cup \overline{W'}$ it suffices to show $V\subseteq \overline{W'}$. But this is obvious as $V$ is Haar regular and Haar thin in $W$.

\smallskip

To summarise, the example $\Lambda_{W'}$ of cube-free integers that are not square-free has a window $W'$ that is not compact modulo $0$. As  $W'$ is Haar aperiodic, Theorem~\ref{theo:B1gen} applies. Thus the dynamical spectrum of the Mirsky measure $\QMGprime$ equals $\piGhat(\LL^\circ)$. It thus coincides with the dynamical spectrum of the Mirsky measure $\QMG$ of cube-free integers. Note that the dynamical spectrum can be identified with the discrete group $\widehat H$. Whereas $\Lambda_{W'}$ fails to have extremal density along $\cA$, both Theorem~\ref{thm.4.6} and Remark~\ref{rem:conclusions} apply to  $\Lambda_{W'}$, due to Lemma~\ref{lemma:generic} and Remark~\ref{rem:approx}.

\section*{Acknowledgements}
NS was supported by the Natural Sciences and Engineering Council of Canada (NSERC) via grant 2020-00038, and he is grateful for the support.


\begin{thebibliography}{9}

\bibitem{BG2}
\newblock M.~Baake and U.~Grimm.
\newblock {\em Aperiodic Order. Vol. 1. A Mathematical Invitation}.
\newblock Encyclopedia of Mathematics and its Applications \textbf{149}.
\newblock Cambridge University Press, Cambridge, 2013.

\bibitem{BHS}
M.~Baake, C.~Huck and N.~Strungaru.
\newblock On weak model sets of extremal density,
\newblock {\em Indag.~Math.} \textbf{28}, 3--31, 2017.

\bibitem{BaakeLenz2004}
M.~Baake and D.~Lenz.
\newblock {Dynamical systems on translation bounded measures: pure point dynamical and diffraction spectra}.
\newblock {\em Ergodic Theory and Dynamical Systems} \textbf{24}, 1867--1893, 2004.

\bibitem{BM}
M. Baake and R. V. Moody.
\newblock  Weighted Dirac combs with pure point diffraction.
\newblock {\em  J.~Reine~Angew. ~Math.}  \textbf{573}, 61--94, 2004.

\bibitem{BF}
C.~Berg and G.~Forst.
\newblock {\em Potential Theory on Locally Compact Abelian Groups}. Springer, Berlin, 1975.

\bibitem{BKKL2015}
A.~Dymek, S.~Kasjan, J.~Ku\l{}aga-Przymus, and M.~Lema\'n{}czyk.
\newblock {${\cB}$-free sets and dynamics}.
\newblock {\em Trans. Amer. Math. Soc.} \textbf{370}, 5425--5489, 2018.

\bibitem{EW11}
M.~Einsiedler and T.~Ward.
\newblock {\em Ergodic theory with a view towards number theory.}
\newblock Graduate Texts in Mathematics \textbf{259},  Springer-Verlag, London, 2011.

\bibitem{G05}
J.-B.~Gou\'er\'e.
\newblock  Quasicrystals and almost periodicity.
\newblock {\em Commun.~Math.~Phys.} \textbf{255}, 655--681, 2005.

\bibitem{HuckRichard15}
C.~Huck and C.~Richard.
\newblock On pattern entropy of weak model sets.
\newblock {\em Discrete Comput.~Geom.} \textbf{54}, 741--757, 2015.

\bibitem{Ka-Ke-Le}
S.\ Kasjan, G.\ Keller and M.\ Lema\'nczyk.
\newblock Dynamics of $\mathcal B$-free sets: a view through the window.
\newblock {\em Int. Math. Res. Not.} \textbf{9}, 2690--2734, 2019.

\bibitem{Kechris}
A.S.~Kechris.
\newblock {\em Classical Descriptive Set Theory}.
Graduate Texts in Mathematics \textbf{156}, Springer-Verlag, New York, 1995.

\bibitem{KR2015}
G.~Keller and C.~Richard.
\newblock Dynamics on the graph of the torus parametrisation.
\newblock {\em Ergodic Theory and Dynamical Systems} \textbf{38}, 1048--1085, 2018.

\bibitem{KR19}
G.~Keller and C.~Richard.
\newblock Periods and factors of weak model sets.
\newblock {\em Israel J.~Math.} \textbf{229}, 85--132, 2019.

\bibitem{Lag00}
J.C.~Lagarias.
\newblock Mathematical quasicrystals and the problem of diffraction. \newblock {\em Directions in Mathematical Quasicrystals}, 61--93, CRM Monogr. Ser. \textbf{13}, Amer. Math. Soc., Providence, RI, 2000.

\bibitem{Lenz2009}
D.~Lenz.
\newblock Continuity of eigenfunctions of uniquely ergodic dynamical systems and intensity of Bragg peaks.
\newblock {\em Commun. Math. Phys.} \textbf{287}, 225--258, 2009.

\bibitem{LSS2020a}
D.~Lenz, T.~Spindeler and N.~Strungaru.
\newblock Pure point diffraction and mean, Besicovitch and Weyl almost periodicity.
\newblock preprint \texttt{arXiv:2006.10821v1}.

\bibitem{Lindenstrauss2001}
E.~Lindenstrauss.
\newblock Pointwise theorems for amenable groups.
\newblock {\em Invent. Math.} \textbf{146}, 259--295, 2001.

\bibitem{Moody2002}
R.~V. Moody.
\newblock Uniform distribution in model sets.
\newblock {\em Canad. Math. Bull.} \textbf{45}, 123--130, 2002.

\bibitem{mr13}
P.~M\"uller and C.~Richard.
\newblock Ergodic properties of randomly coloured point sets.
\newblock {\em Canad. J. Math.} \textbf{65}, 349--402, 2013.

\bibitem{RS}
H.~Reiter and  J.D.~Stegeman.
\newblock {\em Classical Harmonic Analysis and Locally Compact Groups}.
\newblock Clarendon Press, Oxford, 2000.

\bibitem{RS17}
C.~Richard and N.~Strungaru.
\newblock Pure point diffraction and Poisson Summation.
\newblock {\em Ann. H. Poincar\'e} \textbf{18}, 3903--3931, 2017.

\bibitem{Rud}
W. Rudin.
\newblock {\em  Fourier Analysis on Groups}.
\newblock Interscience Tracts in Pure and Applied Mathematics \textbf{12}.
\newblock Interscience Publishers, Wiley, New York, 1962.


\bibitem{Schlottmann00}
M.~Schlottmann.
\newblock Generalized model sets and dynamical systems.
\newblock In {\em Directions in mathematical quasicrystals}, volume~13 of {\em CRM Monogr. Ser.}, pages 143--159. Amer. Math. Soc., Providence, RI, 2000.


\bibitem{S20}
N.~Strungaru.
\newblock Model sets with precompact Borel windows.
\newblock preprint \texttt{arXiv:2012.06912}.


\end{thebibliography}
\end{document}